\newtheorem{theorem}{Theorem}
\newtheorem{lemma}{Lemma}
\title{On Convergence of the Inexact Rayleigh Quotient Iteration
with MINRES\footnote{Supported by the National Science Foundation of
China (No. 10771116) and the Doctoral Program of the Ministry of
Education (No. 20060003003).}}
\author{Zhongxiao Jia\thanks{Department of Mathematical
Sciences, Tsinghua University, Beijing 100084, People's Republic
of China, jiazx@tsinghua.edu.cn}}
\date{}
\begin{document}
\maketitle

%%%%%%%%%%%%%%%%%%%%%%%%%%%%%%%%%%%%%%%%%%%%%%%%%%%%%%%%%
%%%%%  Abstract and keywords
%%%%%%%%%%%%%%%%%%%%%%%%%%%%%%%%%%%%%%%%%%%%%%%%%%%%%%%%%

\begin{abstract}
For the Hermitian inexact Rayleigh quotient iteration (RQI), we
present a new general theory, independent of iterative solvers for
shifted inner linear systems. The theory shows that the method
converges at least quadratically under a new condition, called the
uniform positiveness condition, that may allow inner tolerance
$\xi_k\geq 1$ at outer iteration $k$ and can be considerably weaker
than the condition $\xi_k\leq\xi<1$ with $\xi$ a constant not near
one commonly used in literature. We consider the convergence of the
inexact RQI with the unpreconditioned and tuned preconditioned
MINRES method for the linear systems. Some attractive properties are
derived for the residuals obtained by MINRES. Based on them and the
new general theory, we make a more refined analysis and establish a
number of new convergence results. Let $\|r_k\|$ be the residual
norm of approximating eigenpair at outer iteration $k$. Then all the
available cubic and quadratic convergence results require
$\xi_k=O(\|r_k\|)$ and $\xi_k\leq\xi$ with a fixed $\xi$ not near
one, respectively. Fundamentally different from these, we prove that
the inexact RQI with MINRES generally converges cubically,
quadratically and linearly provided that $\xi_k\leq\xi$ with a
constant $\xi<1$ not near one, $\xi_k=1-O(\|r_k\|)$ and
$\xi_k=1-O(\|r_k\|^2)$, respectively. Therefore, the new convergence
conditions are much more relaxed than ever before.
The theory can be used to design practical stopping criteria to
implement the method more effectively.
Numerical experiments confirm our results.
\bigskip

\textbf{Keywords.} Hermitian, inexact RQI, uniform positiveness
condition, convergence, cubic, quadratic, inner iteration, outer
iteration, unprecondtioned MINRES, tuned preconditioned MINRES

\bigskip

{\bf AMS subject classifications.}\ \  65F15, 65F10, 15A18

\end{abstract}

%%%%%%%%%%%%%%%%%%%%%%%%%%%%%%%%%%%%%%%%%%%%%%%%%%%%%%%%%
%%%%%  Introduction
%%%%%%%%%%%%%%%%%%%%%%%%%%%%%%%%%%%%%%%%%%%%%%%%%%%%%%%%%

\section{Introduction} \label{SecIntro}

We consider the problem of computing an eigenvalue $\lambda$ and the
associated eigenvector $x$ of a large and possibly sparse Hermitian
matrix $A\in \mathbb{C}^{n\times n}$, i.e.,
\begin{equation}
Ax=\lambda x.
\end{equation}
Throughout the paper, we are interested in the eigenvalue
$\lambda_1$ closest to a target $\sigma$ and its corresponding
eigenvector $x_1$ in the sense that
\begin{equation}
|\lambda_1-\sigma| <|\lambda_2-\sigma| \le \cdots \le
|\lambda_n-\sigma|. \label{sigma}
\end{equation}
Suppose that $\sigma$ is between $\lambda_1$ and $\lambda_2$. Then we have
\begin{equation}
|\lambda_1-\sigma|<\frac{1}{2}|\lambda_1-\lambda_2|.\label{gap}
\end{equation}
We denote by $x_1, x_2,\ldots, x_n$ are the unit length eigenvectors
associated with $\lambda_1,\lambda_2,\ldots,\lambda_n$. For brevity
we denote $(\lambda_1,x_1)$ by $(\lambda,x)$. There are a number of
methods for computing $(\lambda,x)$, such as the inverse iteration
\cite{ParlettSEP}, the Rayleigh quotient iteration (RQI)
\cite{ParlettSEP}, the Lanczos method and its shift-invert variant
\cite{ParlettSEP}, the Davidson method and the Jacobi--Davidson
method \cite{stewart,vorst}. However, except the standard Lanczos
method, these methods and shift-invert Lanczos require the exact
solution of a possibly ill-conditioned linear system at each
iteration. This is generally very difficult and even impractical by
a direct solver since a factorization of a shifted $A$ may be too
expensive. So one generally resorts to iterative methods to solve
the linear systems involved, called inner iterations. We call
updates of approximate eigenpairs outer iterations. A combination of
inner and outer iterations yields an inner-outer iterative eigensolver,
also called an inexact eigensolver.

Among the inexact eigensolvers available, the inexact inverse iteration
and the inexact RQI are the simplest and most basic ones. They not
only have their own rights but also are key ingredients of other more
sophisticated and practical inexact solvers, such as inverse
subspace iteration \cite{robbe}
and the Jacobi--Davidson method. So one must first
analyze their convergence. This is generally the first step towards
better understanding and analyzing other more practical inexact
solvers.

For $A$ Hermitian or non-Hermitian, the inexact inverse iteration
and the inexact RQI have been considered, and numerous convergence
results have been established in many papers, e.g.,
\cite{mgs06,MullerVariableShift,freitagspence,
golubye,hochnotay,Lailin,NotayRQI,SimonciniRQI,Smit,EshofJD,xueelman}
and the references therein. For the Hermitian eigenproblem, general
theory for the inexact RQI can be found in Berns-M\"uller and
Spence~\cite{mgs06}, Smit and Paadekooper \cite{Smit} and van den
Eshof~\cite{EshofJD}. They prove that the inexact RQI achieves cubic
and quadratic convergence with decreasing inner tolerance
$\xi_k=O(\|r_k\|)$ and $\xi_k\leq\xi<1$ with a constant $\xi$ not
near one. Supposing that the shifted linear systems are solved by
the minimal residual method (MINRES) \cite{paige,saad},
mathematically equivalent to the conjugate residual method
\cite{saad}, Simoncini and Eld$\acute{e}$n~\cite{SimonciniRQI} prove
the cubic and quadratic convergence of the inexact RQI with MINRES
and present a number of important results under the same assumption
on $\xi_k$. Simoncini and Eld$\acute{e}$n first observed that the
convergence of the inexact RQI may allow $\xi_k$ to almost stagnate,
that is, i.e., $\xi_k$ near one. Xue and Elman~\cite{xueelman} have
refined and extended some results due to Simoncini and
Eld$\acute{e}$n. They have proved that MINRES typically exhibits a
very slow residual decreasing property (i.e., stagnation in their
terminology) during initial steps but the inexact RQI may still
converge; it is the smallest harmonic Ritz value that determines the
convergence of MINRES for the shifted linear systems.
Furthermore, although Xue and Elman's results have indicated
that very slow MINRES residual
decreasing {\em may not} prevent the convergence of inexact RQI, too
slow residual decreasing does matter and will make it fail to
converge. Besides, for the inexact RQI, to the
author's best knowledge, there is no result available on linear
convergence and its conditions.

In this paper we first study the convergence of the inexact RQI,
independent of iterative solvers for inner linear systems. We
present new general convergence results under a certain
uniform positiveness condition, which takes into account the residual
directions obtained by iterative solvers for the inner linear
systems. Unlike the common condition $\xi_k\leq\xi<1$ with $\xi$ a constant,
it appears that the uniform positiveness condition critically
depends on iterative solvers for inner iterations and may allow
$\xi_k\approx 1$ and even $\xi_k>1$, much weaker than the common condition
in existing literatures.

We then focus on the inexact RQI with the unpreconditioned MINRES
used for solving inner shifted linear systems. Our key observation
is that one usually treats the residuals obtained by MINRES as general ones,
simply takes their norms but ignores their directions. As will be clear
from our general convergence results, residual directions of the
linear systems play a crucial role in refining convergence analysis of
the inexact RQI with MINRES. We first establish a few attractive properties of
the residuals obtained by MINRES for the shifted linear systems. By
combining them with the new general convergence theory, we derive a
number of new insightful results that are not only stronger than but
also fundamentally different from the known ones in the literature.
We show how the inexact RQI with MINRES meets the uniform
positiveness condition and how it behaves if the condition fails to
hold.

As will be clear, we trivially have $\xi_k\leq 1$ for MINRES at any
inner iteration step. We prove that the inexact RQI with MINRES
generally converges cubically if the uniform positiveness condition
holds. This condition is shown to be equivalent to $\xi_k\leq \xi$
with a fixed $\xi$ not near one, but the inexact RQI with MINRES now
has cubic convergence other than the familiar quadratic convergence.
Cubic convergence does not require decreasing inner tolerance
$\xi_k=O(\|r_k\|)$ any more. We will see that $\xi=0.1,\ 0.5$ work
are enough, $\xi=0.8$ works well and a smaller $\xi$ is not
necessary. We prove that quadratic convergence only requires
$\xi_k=1-O(\|r_k\|)$, which tends to one as $\|r_k\|\rightarrow 0$
and is much weaker than the familiar condition $\xi_k\leq\xi$ with a
constant $\xi$ not near one. Besides, we show that a linear
convergence condition is $\xi_k= 1-O(\|r_k\|^2)$, closer to one than
$1-O(\|r_k\|)$. Therefore, if stagnation occurs during inner
iterations, the inexact RQI may converge quadratically or linearly;
if stagnation is too serious, that is, if $\xi_k$ is closer to one
than $1-O(\|r_k\|^2)$, the method may fail to converge. Note that,
for the inner linear systems, the smaller $\xi_k$ is, the more
costly it is to solve them using MINRES. As a result, in order to
achieve cubic and quadratic convergence, our new conditions are more
relaxed and easier to meet than the corresponding known ones.
Therefore, our results not only give new insights into the method
but also have impacts on its effective implementations. They allow
us to design practical criteria to best control inner tolerance to
achieve a desired convergence rate and to implement the method more
effectively than ever before. Numerical experiments demonstrate
that, in order to achieve cubic convergence, our new implementation
is about twice as fast as the original one with $\xi_k=O(\|r_k\|)$.

Besides, we establish a lower bound on the norms of approximate
solutions $w_{k+1}$'s of the linear systems obtained by MINRES. We
show that they are of $O(\frac{1}{\|r_k\|^2})$,
$O(\frac{1}{\|r_k\|})$ and $O(1)$ when the inexact MINRES converges
cubically, quadratically and linearly, respectively. So
$\|w_{k+1}\|$ can reflect how fast the inexact RQI converges and can
be used to control inner iteration, similar to those done in, e.g.,
\cite{SimonciniRQI,xueelman}. Making use of the bound, as a
by-product, we present a simpler but weaker convergence result on
the inexact RQI with MINRES. It and the bound for $\|w_{k+1}\|$ are
simpler and interpreted more clearly and easily than those obtained
by Simoncini and Eld$\acute{e}$n~\cite{SimonciniRQI}. However, we
will see that our by-product and their result are weaker than our
main results described above. An obvious drawback is that the cubic
convergence of the exact RQI and of the inexact RQI with MINRES
cannot be recovered when $\xi_k=0$ and $\xi_k=O(\|r_k\|)$,
respectively.

It appears \cite{freitagspence,freitag08b,xueelman} that it is often
beneficial to precondition each shifted inner linear system with a
tuned preconditioner, which can be much more effective than the corresponding
usual preconditioner. How to extend the main results on the inexact RQI with
the unpreconditioned MINRES case to the inexact RQI with
a tuned preconditioned MINRES turns out to be nontrivial. We will carry out
this task in the paper.

The paper is organized as follows. In Section \ref{SecIRQI}, we
review the inexact RQI and present new general convergence results,
independent of iterative solvers for the linear systems. In
Section~\ref{secminres}, we present cubic, quadratic and linear
convergence results on the inexact RQI with the unpreconditioned MINRES.
In Section~\ref{precondit}, we show that
the theory can be extended to the tuned preconditioned MINRES case.
We perform numerical experiments to confirm our results in
Section~\ref{testminres}. Finally, we end up with some concluding
remarks in Section~\ref{conc}.

Throughout the paper, denote by the superscript
* the conjugate transpose of a matrix or vector, by $I$ the
identity matrix of order $n$, by $\|\cdot\|$ the vector 2-norm and
the matrix spectral norm, and by $\lambda_{\min},\lambda_{\max}$
the smallest and largest eigenvalues of $A$, respectively.

\section{The inexact RQI and general convergence theory}\label{SecIRQI}

RQI is a famous iterative algorithm and its locally cubic
convergence for Hermitian problems is very attractive
\cite{ParlettSEP}. It plays a crucial role in some practical
effective algorithms, e.g., the QR algorithm,
\cite{GolubMC,ParlettSEP}. Assume that the unit length $u_k$ is already a
reasonably good approximation to $ x $. Then the Rayleigh quotient
$\theta_k = u^*_k A u_k$ is a good approximation to $\lambda$ too.
RQI computes a new approximation $u_{k+1}$ to $x$ by solving the
shifted inner linear system
\begin{equation} \label{EqERQILinearEquation1}
( A - \theta_k I ) w = u_k
\end{equation}
for $w_{k+1}$ and updating $u_{k+1}=w_{k+1}/\|w_{k+1}\|$ and
iterates until convergence. It is known
\cite{mgs06,NotayRQI,ParlettSEP} that if
$$
|\lambda-\theta_0|
<\frac{1}{2}\min_{j=2,3,\ldots,n}|\lambda-\lambda_j|
$$
then RQI (asymptotically) converges to $ \lambda $ and $x$
cubically. So we can assume that the eigenvalues of $A$ are ordered
as
\begin{equation}
|\lambda-\theta_k| <|\lambda_2-\theta_k| \le \cdots \le
|\lambda_n-\theta_k|. \label{order}
\end{equation}
With this ordering and $\lambda_{\min}\leq\theta_k\leq\lambda_{\max}$,
we have
\begin{equation}
|\lambda-\theta_k| <\frac{1}{2}|\lambda-\lambda_2|. \label{sep1}
\end{equation}

An obvious drawback of RQI is that at each iteration $k$ we need the
exact solution $w_{k+1}$ of $( A - \theta_k I ) w= u_k$. For a large
$A$, it is generally very expensive and even impractical to solve it
by a direct solver due to excessive memory and/or computational
cost. So we must resort to iterative solvers to get an approximate
solution of it. This leads to the inexact RQI.
\eqref{EqERQILinearEquation1} is solved by an iterative solver
and an approximate solution $w_{k+1}$ satisfies
\begin{equation} \label{EqIRQILinearEquation1}
( A - \theta_k I )w_{k+1} = u_k + \xi_k d_k, \quad u_{k+1} =w_{k+1}
/ \|w_{k+1} \|
\end{equation}
with $ 0 < \xi_k \le \xi$, where $\xi_kd_k $ with $\|d_k\|=1$ is the
residual of $(A-\theta_k I)w=u_k$, $d_k$ is the residual direction
vector and $\xi_k$ is the {\em relative} residual norm (inner
tolerance) as $\|u_k\|=1$ and may change at every outer iteration
$k$. This process is summarized as Algorithm 1. If $\xi_k=0$ for all
$k$, Algorithm 1 becomes the exact RQI.

\begin{algorithm}
  \caption{The inexact RQI} \label{AlgIRQI}
  \begin{algorithmic}[1]
  \STATE Choose a unit length $u_0$, an approximation to $x$.
  \FOR{$k$=0,1, \ldots}
  \STATE $\theta_k = u^*_k A u_k$.
  \STATE Solve $(A-\theta_k I)w=u_k$ for $w_{k+1}$ by an
  iterative solver with
  $$
  \|(A-\theta_k I)
  w_{k+1}-u_k\|=\xi_k\leq\xi.
  $$
  \STATE $u_{k+1} = w_{k+1}/\|w_{k+1}\|$.
  \STATE If convergence occurs, stop.
  \ENDFOR
  \end{algorithmic}
\end{algorithm}

It is always assumed in the literature that $\xi<1$ when making a
convergence analysis. This requirement seems very natural as
heuristically \eqref{EqERQILinearEquation1} should be solved with
some accuracy. Van den Eshof \cite{EshofJD} presents a quadratic
convergence bound that requires $\xi$ not near one, improving a
result of \cite{Smit} by a factor two. Similar quadratic convergence
results on the inexact RQI have also been proved in some other
papers, e.g., \cite{mgs06,SimonciniRQI,Smit}, under the same
condition on $\xi$. To see a fundamental difference between the
existing results and ours (cf.
Theorem~\ref{ThmIRQIQuadraticConvergence}), we take the result of
\cite{EshofJD} as an example and restate it. Before proceeding, we
define
$$
\beta=\frac{\lambda_{\max}-\lambda_{\min}}{|\lambda_2-\lambda|}
$$
throughout the paper. We comment that
$\lambda_{\max}-\lambda_{\min}$ is the spectrum spread of $A$ and
$|\lambda_2-\lambda|$ is the gap or separation of $\lambda$ and the
other eigenvalues of $A$.

\begin{theorem}{\rm \cite{EshofJD}} \label{eshof}
Define $\phi_k=\angle(u_k, x)$ to be the acute angle between $u_k$
and $x$, and assume that $w_{k+1}$ is such that
\begin{equation} \label{EqIRQIXi}
\| ( A - \theta_k I ) w_{k+1} - u_k \|=\xi_k \le \xi<1.
\end{equation}
Then letting $\phi_{k+1}=\angle(u_{k+1},x)$ be the acute between
$u_{k+1}$ and $x$, the inexact RQI converges quadratically:
\begin{equation} \label{EqIRQIVan den Eshof}
\tan \phi_{k+1} \le \frac{\beta\xi}{\sqrt{ 1 - \xi^2 }} \sin^2
\phi_k + O( \sin^3 \phi_k ).
\end{equation}
\end{theorem}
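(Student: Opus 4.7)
The plan is to reduce the bound on $\tan\phi_{k+1}$ to separate contributions from the spectral separation and from the inexactness, then optimize the latter over all admissible residual directions. First I would decompose the relevant vectors along $x$ and $x^{\perp}$: write $u_k = x\cos\phi_k + s_k$ with $s_k\perp x$ and $\|s_k\|=\sin\phi_k$, and write the residual direction as $d_k = \eta\, x + f_k$ with $f_k\perp x$ and $|\eta|^2 + \|f_k\|^2 = 1$. Because $Ax=\lambda x$ and $A$ is Hermitian, $A$ commutes with the spectral projector $P=I-xx^{*}$, so applying $x^{*}$ and $P$ to \eqref{EqIRQILinearEquation1} decouples the parallel and orthogonal components of $w_{k+1}$:
\begin{equation*}
x^{*}w_{k+1} = \frac{\cos\phi_k + \xi_k\eta}{\lambda-\theta_k},\qquad
Pw_{k+1} = \bigl[(A-\theta_k I)|_{x^{\perp}}\bigr]^{-1}(s_k+\xi_k f_k).
\end{equation*}

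From these I would write
\begin{equation*}
\tan\phi_{k+1} \;=\; \frac{\|Pw_{k+1}\|}{|x^{*}w_{k+1}|}
\;\le\; \frac{|\lambda-\theta_k|}{|\lambda_2-\theta_k|}\cdot\frac{\|s_k+\xi_k f_k\|}{|\cos\phi_k+\xi_k\eta|}.
\end{equation*}
The first factor is the standard RQI spectral factor: the Rayleigh quotient bound $|\lambda-\theta_k| = |(u_k-x\cos\phi_k)^{*}(A-\lambda I)(u_k-x\cos\phi_k)|/\|\cdot\|^2 \le (\lambda_{\max}-\lambda_{\min})\sin^2\phi_k$ combined with $|\lambda_2-\theta_k|\ge|\lambda_2-\lambda|-|\lambda-\theta_k|$ (using the ordering \eqref{order}) gives $|\lambda-\theta_k|/|\lambda_2-\theta_k|\le \beta\sin^2\phi_k\,\bigl(1+O(\sin^2\phi_k)\bigr)$.

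The main obstacle, and the place where the sharp constant $\xi/\sqrt{1-\xi^2}$ comes from, is the second factor. To leading order in $\sin\phi_k$,
\begin{equation*}
\frac{\|s_k+\xi_k f_k\|^2}{|\cos\phi_k+\xi_k\eta|^2}
\;=\; \frac{\xi_k^{2}\|f_k\|^2 + O(\sin\phi_k)}{|1+\xi_k\eta|^2 + O(\sin^2\phi_k)},
\end{equation*}
and using $\|f_k\|^2=1-|\eta|^2$ together with $|1+\xi_k\eta|^2=(1+\xi_k\mathrm{Re}\,\eta)^2+\xi_k^2(\mathrm{Im}\,\eta)^2$ I would show that the ratio $\xi_k^2(1-|\eta|^2)/|1+\xi_k\eta|^2$ is maximized over the closed disk $|\eta|\le 1$ at the real value $\eta=-\xi_k$, where it equals $\xi_k^2/(1-\xi_k^2)$. (One checks that any imaginary part of $\eta$ only inflates the denominator while shrinking the numerator, and a one-variable calculus exercise handles the real part.)

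Putting the two factors together and absorbing the $O(\sin\phi_k)$ remainders into a single cubic correction yields
\begin{equation*}
\tan\phi_{k+1} \;\le\; \frac{\beta\,\xi_k}{\sqrt{1-\xi_k^{2}}}\sin^2\phi_k + O(\sin^3\phi_k)\;\le\;\frac{\beta\,\xi}{\sqrt{1-\xi^{2}}}\sin^2\phi_k + O(\sin^3\phi_k),
\end{equation*}
which is exactly \eqref{EqIRQIVan den Eshof}. The requirement $\xi<1$ enters precisely in the optimization: it is what keeps the denominator $\sqrt{1-\xi^2}$ bounded away from zero, and is needed as well to justify $|\cos\phi_k+\xi_k\eta|$ being bounded below (so the division is legitimate) for all sufficiently small $\phi_k$.
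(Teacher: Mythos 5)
Your proposal is correct, and it is worth noting that the paper itself offers no proof of this statement: Theorem~\ref{eshof} is simply restated from van den Eshof \cite{EshofJD}, so the only internal point of comparison is the paper's proof of Theorem~\ref{ThmIRQIQuadraticConvergence}. Your argument starts from the same ingredients as that proof --- the orthogonal splittings of $u_k$ and $d_k$, inversion of $A-\theta_k I$ componentwise along $x$ and $x^{\perp}$, the identity $\lambda-\theta_k=(\lambda-e_k^*Ae_k)\sin^2\phi_k$, and the spectral bound $\|[(A-\theta_k I)|_{x^\perp}]^{-1}\|=1/|\lambda_2-\theta_k|$ --- but where the paper keeps the residual direction explicit and imposes the uniform positiveness condition $|\cos\phi_k+\xi_k\cos\psi_k|\ge c$ (yielding the factor $2\beta\xi_k/c$, with the $2$ coming from the crude bound $|\lambda_2-\theta_k|>|\lambda_2-\lambda|/2$), you instead take the worst case over all admissible residual directions: maximizing $\xi_k^2(1-|\eta|^2)/|\cos\phi_k+\xi_k\eta|^2$ over the disk $|\eta|\le 1$, with the maximum at the real value $\eta=-\xi_k/\cos\phi_k$ giving $\xi_k^2/(\cos^2\phi_k-\xi_k^2)=\xi_k^2/(1-\xi_k^2)+O(\sin^2\phi_k)$. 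That optimization, together with the sharper $|\lambda_2-\theta_k|\ge|\lambda_2-\lambda|-O(\sin^2\phi_k)$, is exactly what produces van den Eshof's constant $\beta\xi/\sqrt{1-\xi^2}$ and explains cleanly why $\xi<1$ is forced in this worst-case setting --- which is precisely the point the paper then undercuts by showing that for actual solvers (MINRES) the direction $\eta$ is not adversarial. One small repair: the displayed identity $|\lambda-\theta_k|=|(u_k-x\cos\phi_k)^*(A-\lambda I)(u_k-x\cos\phi_k)|/\|\cdot\|^2$ is not right as written; the equality $\theta_k-\lambda=s_k^*(A-\lambda I)s_k$ holds without the normalization (since $\|u_k\|=1$), and then $|\lambda-\theta_k|\le\|A-\lambda I\|\sin^2\phi_k\le(\lambda_{\max}-\lambda_{\min})\sin^2\phi_k$ as you use. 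With that fixed, the chain of estimates and the absorption of all $O(\sin\phi_k)$ corrections into the cubic remainder are sound.
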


However, we will see soon that the condition $\xi<1$ not near one
can be stringent and unnecessary for quadratic convergence. To see
this, let us decompose $ u_k $ and $ d_k $ into the orthogonal
direct sums
\begin{eqnarray}
&u_k = x \, \cos \phi_k + e_k \, \sin \phi_k, \quad e_k \perp x,
\label{EqIRQIDecompositoinOfu_k} \\
&d_k = x \, \cos \psi_k + f_k \, \sin \psi_k, \quad f_k \perp x
\label{EqIRQIDecompositoinOfd_k}
\end{eqnarray}
with $\|e_k\|=\|f_k\|=1$ and $\psi_k=\angle(d_k,x)$. Then
\eqref{EqIRQILinearEquation1} can be written as
\begin{eqnarray} \label{EqIRQILinearEquation2}
( A - \theta_k I ) w_{k+1} = ( \cos \phi_k + \xi_k \, \cos \psi_k ) \, x
+ ( e_k \, \sin \phi_k + \xi_k \, f_k \, \sin \psi_k ).
\end{eqnarray}
Inverting $ A - \theta_k I$ gives
\begin{equation} \label{EqIRQIwk}
w_{k+1} = ( \lambda - \theta_k )^{-1} ( \cos \phi_k + \xi_k \,
\cos \psi_k ) \, x + ( A - \theta_k I )^{-1} ( e_k \, \sin \phi_k
+ \xi_k \, f_k \, \sin \psi_k ).
\end{equation}

We now revisit the convergence of the inexact RQI and prove that it
is the size of $|\cos\phi_k + \xi_k\cos \psi_k|$ other than
$\xi_k\leq\xi<1$ that is critical in affecting convergence.

\begin{theorem} \label{ThmIRQIQuadraticConvergence}
If the uniform positiveness condition
\begin{equation} \label{EqIRQIC}
|\cos \phi_k + \xi_k \cos \psi_k| \ge c
\end{equation}
is satisfied with a constant $c>0$ uniformly independent of $k$,
then
\begin{eqnarray}
\tan \phi_{k+1} &\le &2\beta\frac{\sin \phi_k + \xi_k \sin \psi_k}
{|\cos \phi_k + \xi_k \cos \psi_k|}\sin^2\phi_k   \label{bound1}\\
&\le &\frac{2\beta\xi_k}{c} \sin^2 \phi_k + O( \sin^3 \phi_k ),
\label{EqIRQIQuadraticConvergence}
\end{eqnarray}
that is, the inexact RQI converges quadratically at least for
uniformly bounded $\xi_k\leq\xi$ with $\xi$ some moderate constant.
\end{theorem}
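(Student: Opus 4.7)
The plan is to work directly with the explicit formula \eqref{EqIRQIwk} for $w_{k+1}$ obtained by inverting $A-\theta_k I$ on the decomposition \eqref{EqIRQILinearEquation2}, and to read off the components of $w_{k+1}$ parallel and perpendicular to $x$ separately. Since $x$ is an eigenvector of $A$ (hence of $(A-\theta_k I)^{-1}$) and $e_k, f_k \perp x$, the Hermitian structure guarantees $(A-\theta_k I)^{-1}$ leaves $x^\perp$ invariant, so \eqref{EqIRQIwk} already displays the orthogonal splitting: the $x$-component has magnitude $|\lambda-\theta_k|^{-1}\,|\cos\phi_k+\xi_k\cos\psi_k|$, while the part in $x^\perp$ has norm bounded by $\max_{j\ge 2}|\lambda_j-\theta_k|^{-1}\,\bigl(\sin\phi_k+\xi_k\sin\psi_k\bigr)$ by the triangle inequality.

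By the ordering \eqref{order}, the relevant norm of $(A-\theta_k I)^{-1}$ on $x^\perp$ is $1/|\lambda_2-\theta_k|$. Since $u_{k+1}=w_{k+1}/\|w_{k+1}\|$, the tangent of $\phi_{k+1}=\angle(u_{k+1},x)$ equals the ratio of the norm of the orthogonal component of $w_{k+1}$ to the magnitude of its $x$-component. Combining the two estimates gives
\[
\tan\phi_{k+1}\;\le\;\frac{|\lambda-\theta_k|}{|\lambda_2-\theta_k|}\cdot
\frac{\sin\phi_k+\xi_k\sin\psi_k}{|\cos\phi_k+\xi_k\cos\psi_k|}.
\]
At this point the uniform positiveness condition \eqref{EqIRQIC} enters only to guarantee the $x$-component is nonvanishing; it is not yet used quantitatively.

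The heart of the argument is then the Rayleigh quotient accuracy bound. Writing $\theta_k=u_k^*Au_k$ using \eqref{EqIRQIDecompositoinOfu_k} and exploiting $Ax=\lambda x$ with $e_k\perp x$, the cross terms drop and one gets $\theta_k-\lambda=\sin^2\phi_k\,(e_k^*Ae_k-\lambda)$, whence $|\lambda-\theta_k|\le(\lambda_{\max}-\lambda_{\min})\sin^2\phi_k$. Meanwhile the separation inequality \eqref{sep1} yields $|\lambda_2-\theta_k|\ge\tfrac12|\lambda_2-\lambda|$, so $|\lambda-\theta_k|/|\lambda_2-\theta_k|\le 2\beta\sin^2\phi_k$. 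Plugging this into the displayed bound gives the first inequality \eqref{bound1}.

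For the second inequality \eqref{EqIRQIQuadraticConvergence}, I apply \eqref{EqIRQIC} in the denominator and split the numerator: $\sin\phi_k+\xi_k\sin\psi_k\le \xi_k\sin\psi_k+\sin\phi_k\le\xi_k+\sin\phi_k$ using $\sin\psi_k\le 1$. The $\xi_k$ part gives the advertised leading term $\frac{2\beta\xi_k}{c}\sin^2\phi_k$, and the $\sin\phi_k$ part is absorbed into $O(\sin^3\phi_k)$. I do not anticipate a serious obstacle: the only place where care is needed is verifying that the $x$-components in \eqref{EqIRQIwk} are genuinely non-cancelling (which is exactly what \eqref{EqIRQIC} ensures) and that the perpendicular-part bound uses the correct spectral gap $|\lambda_2-\theta_k|$ rather than the smaller $|\lambda-\theta_k|$; this is where the ordering \eqref{order} is crucial and where previous analyses requiring $\xi<1$ become unnecessarily restrictive.
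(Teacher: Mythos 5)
Your proposal is correct and follows essentially the same route as the paper: decompose $w_{k+1}$ via \eqref{EqIRQIwk} into its $x$-component and $x^\perp$-component, bound $\tan\phi_{k+1}$ by their ratio, use $|\lambda-\theta_k|\le(\lambda_{\max}-\lambda_{\min})\sin^2\phi_k$ together with $|\lambda_2-\theta_k|>\tfrac12|\lambda_2-\lambda|$, and then invoke \eqref{EqIRQIC} with $\sin\psi_k\le 1$ to split off the $O(\sin^3\phi_k)$ term. No gaps.
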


\begin{proof}
Note that (\ref{EqIRQIwk}) is an orthogonal direct sum decomposition
of $w_{k+1}$ since for a Hermitian $A$ the second term is orthogonal
to $x$. We then have
$$
\tan \phi_{k+1} = | \lambda - \theta_k | \frac{\| ( A - \theta_k I
)^{-1} ( e_k \, \sin \phi_k + \xi_k f_k \, \sin \psi_k ) \|} {|\cos
\phi_k + \xi_k \, \cos \psi_k|}.
$$
As $A$ is Hermitian and $e_k\perp x$, it is easy to verify (cf.
\cite[p. 77]{ParlettSEP}) that
$$
\lambda -\theta_k=(\lambda - e^*_k A e_k)\sin^2\phi_k,
$$
$$
|\lambda_2 - \lambda| \le | \lambda - e^*_k A e_k | \le
\lambda_{\max}-\lambda_{\min},
$$
\begin{equation}
|\lambda_2-\lambda|\sin^2\phi_k\leq |\lambda - \theta_k| \leq
(\lambda_{\max}-\lambda_{\min})\sin^2\phi_k. \label{error1}
\end{equation}
Since {\small
\begin{eqnarray*}
\|( A -\theta_k I )^{-1} ( e_k \,
\sin \phi_k + \xi_k f_k \, \sin \psi_k )\|& \le& \| ( A - \theta_k
I )^{-1} e_k \| \sin \phi_k +
\xi_k \| ( A - \theta_k I )^{-1} f_k \| \sin \psi_k \\
& \le& |\lambda_2 -\theta_k|^{-1}(\sin \phi_k + \xi_k \sin\psi_k
)\\
&\leq&2 |\lambda_2-\lambda|^{-1}(\sin \phi_k + \xi_k \sin\psi_k)
\end{eqnarray*}}
with the last inequality holding because of (\ref{order}), we get
\begin{eqnarray*}
\tan \phi_{k+1}& \le& | \lambda - e^*_k A e_k | \sin^2 \phi_k \frac{
2(\sin \phi_k + \xi_k \sin \psi_k)} { |\lambda_2 - \lambda||\cos
\phi_k + \xi_k\cos
\psi_k|}\\
&\leq&\frac{2|\lambda_n -\lambda|}{|\lambda_2 -\lambda|}\frac{\sin
\phi_k + \xi_k \sin \psi_k} {|\cos \phi_k + \xi_k \cos
\psi_k|}\sin^2\phi_k
\\
&\leq&\xi_k \frac{2(\lambda_{\max}-\lambda_{\min})}{c|\lambda_2 -
\lambda|} \sin^2 \phi_k + O( \sin^3 \phi_k ).
\end{eqnarray*}
\end{proof}

Define $\|r_k\|=\|(A-\theta_k I)u_k\|$. Then by (\ref{sep1}) we get
$|\lambda_2-\theta_k|>\frac{|\lambda_2-\lambda|}{2}$. So it is known
from \cite[Theorem 11.7.1]{ParlettSEP} that
\begin{equation}
\frac{\|r_k\|}{\lambda_{\max}-\lambda_{\min}}\leq\sin\phi_k\leq\frac{2\|r_k\|}
{|\lambda_2-\lambda|}.\label{parlett}
\end{equation}

We can present an alternative of (\ref{EqIRQIQuadraticConvergence})
when $\|r_k\|$ is concerned.

\begin{theorem}\label{resbound}
If the uniform positiveness condition {\rm (\ref{EqIRQIC})} holds,
then
\begin{equation}
\|r_{k+1}\|\leq\frac{8\beta^2\xi_k} {c|\lambda_2-\lambda|}\|r_k\|^2
+O(\|r_k\|^3).\label{resgeneral}
\end{equation}
\end{theorem}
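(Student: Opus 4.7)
The plan is to deduce the residual-norm bound directly from the previously established tangent bound in Theorem~\ref{ThmIRQIQuadraticConvergence}, converting between $\sin\phi_k$ and $\|r_k\|$ via the Parlett-type inequalities in \eqref{parlett}. No new analytic work is needed beyond chaining three estimates.

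First I would start from the conclusion of Theorem~\ref{ThmIRQIQuadraticConvergence},
$$\tan\phi_{k+1}\le\frac{2\beta\xi_k}{c}\sin^2\phi_k+O(\sin^3\phi_k),$$
and use the elementary bound $\sin\phi_{k+1}\le\tan\phi_{k+1}$ to transfer this into a bound on $\sin\phi_{k+1}$, with the same leading constant and a harmless $O(\sin^3\phi_k)$ remainder (expanding $\sin=\tan\cos$ only perturbs the cubic tail).

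Second, I would use the right-hand inequality of \eqref{parlett} to replace $\sin^2\phi_k$ by $\|r_k\|^2$, producing the factor $\frac{4}{|\lambda_2-\lambda|^2}$, and then use the left-hand inequality of \eqref{parlett}, i.e.\ $\|r_{k+1}\|\le(\lambda_{\max}-\lambda_{\min})\sin\phi_{k+1}$, to convert the left-hand side from an angle to a residual norm, introducing the factor $\lambda_{\max}-\lambda_{\min}$. Combining the constants with the definition $\beta=(\lambda_{\max}-\lambda_{\min})/|\lambda_2-\lambda|$ yields
$$\|r_{k+1}\|\le\frac{8(\lambda_{\max}-\lambda_{\min})\beta\xi_k}{c|\lambda_2-\lambda|^2}\|r_k\|^2+O(\|r_k\|^3)=\frac{8\beta^2\xi_k}{c|\lambda_2-\lambda|}\|r_k\|^2+O(\|r_k\|^3),$$
which is the asserted bound. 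The $O(\sin^3\phi_k)$ term translates to $O(\|r_k\|^3)$ by the same right-hand inequality of \eqref{parlett}.

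There is essentially no hard step here: everything follows from routine substitution. The only thing to be careful about is bookkeeping of constants, specifically that the factor of $2$ from the right-hand side of \eqref{parlett} gets squared to produce the $4$, which then combines with the $2$ already present in the tangent bound to give the overall constant $8$, and that the one surviving factor of $\lambda_{\max}-\lambda_{\min}$ merges with $1/|\lambda_2-\lambda|$ to turn one $\beta$ into $\beta^2$. As long as the factors are tracked correctly and the cubic remainder is verified to remain cubic after both substitutions, the result is immediate.
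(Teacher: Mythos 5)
Your proposal is correct and follows essentially the same route as the paper: the paper's proof likewise substitutes $\|r_{k+1}\|/(\lambda_{\max}-\lambda_{\min})\le\sin\phi_{k+1}\le\tan\phi_{k+1}$ and the upper bound $\sin\phi_k\le 2\|r_k\|/|\lambda_2-\lambda|$ from \eqref{parlett} into \eqref{EqIRQIQuadraticConvergence}, and your constant bookkeeping ($2^2\cdot 2=8$, one factor $\lambda_{\max}-\lambda_{\min}$ converting $\beta$ into $\beta^2/|\lambda_2-\lambda|$) matches the stated bound.
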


\begin{proof}
Note from (\ref{parlett}) that
$$
\frac{\|r_{k+1}\|}{\lambda_{\max}-\lambda_{\min}}\leq\sin\phi_{k+1}\leq\tan\phi_{k+1}.
$$
Substituting it and the upper bound of (\ref{parlett}) into
(\ref{EqIRQIQuadraticConvergence}) establishes (\ref{resgeneral}).
\end{proof}

For the special case that the algebraically smallest eigenvalue is
of interest, Jia and Wang~\cite{jiawang} proved a slightly different
result from Theorem~\ref{ThmIRQIQuadraticConvergence}. Similar to
the literature, however, they still assumed that $\xi_k\leq\xi<1$ and did not
analyze the theorem further, though their proof did not use this
assumption. A striking insight from the theorem is that the
condition $\xi_k\leq\xi<1$ may be considerably relaxed. If
$\cos\psi_k$ is positive, the uniform positiveness condition holds
for any uniformly bounded $\xi_k\leq\xi$. So we can have $\xi\geq 1$
considerably. If $\cos\psi_k$ is negative, then
$|\cos\phi_k+\xi_k\cos\psi_k|\geq c$ means that
$$
\xi_k\leq\frac{c-\cos\phi_k}{\cos\psi_k}
$$
if $\cos\phi_k+\xi_k\cos\psi_k\geq c$ with $c<1$ is required or
$$
\xi_k\geq\frac{c+\cos\phi_k}{-\cos\psi_k}
$$
if $-\cos\phi_k-\xi_k\cos\psi_k\geq c$ is required. Keep in mind
that $\cos\phi_k\approx 1$. So the size of $\xi_k$ critically depends on
that of $\cos\psi_k$, and for a given $c$ we may have $\xi_k\approx 1$ and even
$\xi_k>1$. Obviously, without the information on $\cos\psi_k$, it
would be impossible to access or estimate $\xi_k$. As a general
convergence result, however, its significance and importance consist
in that it reveals a new remarkable fact: It appears first time that (13)
may allow $\xi_k$ to be relaxed (much) more than that used in all
known literatures and meanwhile preserves the same convergence rate
of outer iteration. As a result, the condition $\xi_k\leq\xi<1$ with
constant $\xi$ not near one may be stringent and unnecessary for the
quadratic convergence of the inexact RQI, independent of iterative
solvers for the linear systems. The new condition has a strong impact on
practical implementations as we must use a
certain iterative solver, e.g., the very popular MINRES method and
the Lanczos method (SYMMLQ) for solving $(A-\theta_k I)w=u_k$. We will
see that $\cos\psi_k$ is critically iterative solver dependent. For
MINRES, $\cos\psi_k$ has some very attractive properties,
by which we can precisely determine bounds for $\xi_k$ in
Section~\ref{secminres},
which are much more relaxed than those in the literature. For the Lanczos
method, we refer to \cite{jia09} for $\cos\psi_k$ and $\xi_k$, where
$\cos\psi_k$ and $\xi_k$ are fundamentally different from those
obtained by MINRES and $\xi_k\geq 1$ considerably is allowed.

{\bf Remark 1}. If $ \xi_k = 0 $ for all $ k $, the inexact RQI
reduces to the exact RQI and
Theorems~\ref{ThmIRQIQuadraticConvergence}--\ref{resbound} show
(asymptotically) cubic convergence: $\tan \phi_{k+1}=O(\sin^3\phi_k)$ and
$\|r_{k+1}\|=O(\|r_k\|^3)$.

{\bf Remark 2}.  If the linear systems are solved with decreasing
tolerance $\xi_k=O(\sin\phi_k)=O(\|r_k\|)$, then  $\tan
\phi_{k+1}=O(\sin^3\phi_k)$ and $\|r_{k+1}\|=O(\|r_k\|^3)$. Such
(asymptotically) cubic convergence also appears in several papers, e.g.,
\cite{mgs06,Smit,EshofJD}, either explicitly or implicitly.

\section{Convergence of the inexact RQI with MINRES}\label{secminres}

The previous results and discussions are for general purpose,
independent of iterative solvers for $(A-\theta_k I)w=u_k$. Since
we have $\lambda_{\rm min}\leq\theta_k\leq\lambda_{\rm max}$,
the matrix $A-\theta_k I$ is Hermitian indefinite. One of the most
popular iterative solvers for $(A-\theta_k I)w=u_k$ is the MINRES
method as it has a very attractive residual monotonic decreasing
property \cite{paige,saad}. This leads to the inexact RQI with MINRES.

We briefly review MINRES for solving (\ref{EqERQILinearEquation1}).
At outer iteration $k$, taking the starting vector $v_1$ to be
$u_k$, the $m$-step Lanczos process on $A-\theta_k I$ can be written
as
\begin{equation}
( A - \theta_k I ) V_m =
V_mT_m+t_{m+1m}v_{m+1}e_m^*=V_{m+1}\hat{T}_m, \label{lanczosp}
\end{equation}
where the columns of $ V_m =(v_1,\ldots,v_m)$ form an orthonormal
basis of the Krylov subspace $\mathcal{K}_m(A - \theta_k I, u_k )
= \mathcal{K}_m ( A, u_k )$, $V_{m+1}=(V_m, v_{m+1})$,
$T_m=(t_{ij})=V_m^*(A-\theta_kI)V_m$ and
$\hat{T}_m=V_{m+1}^*(A-\theta_kI)V_m$ \cite{ParlettSEP,saad}.
Taking the zero vector as an initial guess to the solution of
$(A-\theta_k I)w=u_k$, MINRES \cite{GolubMC,paige,saad} extracts the
approximate solution $w_{k+1}=V_m\hat y$ to $(A-\theta_k I)w=u_k$
from $\mathcal{K}_m (A, u_k)$, where $\hat y$ is the solution of the
least squares problem $\min\|e_1-\hat{T}_my\|$ with $e_1$ being the
first coordinate vector of dimension $m+1$.

By the residual monotonic decreasing property of MINRES, we
trivially have $\xi_k \le\|u_k \| = 1 $ for all $k$ and any inner
iteration steps $m$. Here we must take $m>1$; for $m=1$,
it is easily verified that $\hat y=0$ and thus $w_{k+1}=0$ and
$\xi_k=1$ by noting that $t_{11}=u_k^*(A-\theta_k I)u_k=0$. So
$u_{k+1}$ is undefined and the inexact RQI with MINRES breaks down
if $m=1$.

The residual direction vectors $d_k$ obtained by MINRES
have some attractive features and we can precisely get subtle bounds
for $\sin\psi_k$ and $\cos\psi_k$, as the following results show.

\begin{theorem}\label{minres}
For MINRES, let the unit length vectors $e_k$ and $f_k$ be as in
{\rm (\ref{EqIRQIDecompositoinOfu_k})} and {\rm
(\ref{EqIRQIDecompositoinOfd_k})}, define the angle
$\varphi_k=\angle(f_k,(A-\theta_k I)e_k)$ and assume that
\begin{equation}
|\cos\varphi_k|\geq|\cos\varphi|>0 \label{assump1}
\end{equation}
holds uniformly for an angle $\varphi$ away from $\frac{\pi}{2}$
independent of $k$, i.e.,
\begin{equation}
|f_k^*(A-\theta I)e_k|=\|(A-\theta_k I)e_k\||\cos\varphi_k|\geq
\|(A-\theta_k I)e_k\||\cos\varphi|.\label{assump2}
\end{equation}
Then we have
\begin{eqnarray}
 \sin\psi_k&\leq&\frac{2\beta}{|\cos\varphi|}
 \sin\phi_k,\label{sinpsi}\\
 \cos\psi_k&=&\pm (1-O(\sin^2\phi_k)). \label{cospsi}
\end{eqnarray}
Furthermore, provided $\xi_k>\sin\phi_k$, then
\begin{eqnarray}
\cos\psi_k&=&-1+O(\sin^2\phi_k),\label{cospsik}\\
d_k&=&-x+O(\sin\phi_k).\label{dkerror}
\end{eqnarray}
\end{theorem}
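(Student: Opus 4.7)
The plan is to exploit the defining Galerkin-type orthogonality of MINRES: the residual $u_k-(A-\theta_k I)w_{k+1}=-\xi_k d_k$ is orthogonal to $(A-\theta_k I)\mathcal{K}_m(A-\theta_k I,u_k)$. Since $u_k=v_1\in\mathcal{K}_m$ and $w_{k+1}\in\mathcal{K}_m$, this yields two useful identities,
\begin{equation*}
d_k^*(A-\theta_k I)u_k=0,\qquad d_k^*(A-\theta_k I)w_{k+1}=0.
\end{equation*}
The first identity will be used to bound $\sin\psi_k$ and hence $|\cos\psi_k|$; the second, together with \eqref{EqIRQILinearEquation1}, will pin down the sign of $\cos\psi_k$ in the regime $\xi_k>\sin\phi_k$.

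To obtain \eqref{sinpsi} and \eqref{cospsi}, I would substitute
\begin{equation*}
(A-\theta_k I)u_k=(\lambda-\theta_k)\cos\phi_k\,x+\sin\phi_k\,(A-\theta_k I)e_k
\end{equation*}
and the decomposition \eqref{EqIRQIDecompositoinOfd_k} of $d_k$ into $d_k^*(A-\theta_k I)u_k=0$, and invoke Hermiticity together with $e_k\perp x$ (so that $x^*(A-\theta_k I)e_k=0$) to reduce the identity to
\begin{equation*}
(\lambda-\theta_k)\cos\phi_k\cos\psi_k+\sin\phi_k\sin\psi_k\,f_k^*(A-\theta_k I)e_k=0.
\end{equation*}
Solving for $\cot\psi_k$ and bounding it from below using (i) the hypothesis \eqref{assump2}, (ii) the inequality $\|(A-\theta_k I)e_k\|\ge|\lambda_2-\theta_k|\ge\tfrac12|\lambda_2-\lambda|$ coming from $e_k\in\mathrm{span}\{x_2,\dots,x_n\}$ combined with \eqref{order} and \eqref{sep1}, and (iii) the upper bound $|\lambda-\theta_k|\le(\lambda_{\max}-\lambda_{\min})\sin^2\phi_k$ from \eqref{error1}, will produce $|\tan\psi_k|\le(2\beta/|\cos\varphi|)\sin\phi_k$. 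Since $\sin\psi_k\le|\tan\psi_k|$, this is exactly \eqref{sinpsi}, and feeding it into $\cos^2\psi_k=1-\sin^2\psi_k$ yields \eqref{cospsi}.

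To resolve the sign under the extra hypothesis $\xi_k>\sin\phi_k$, I would use the second orthogonality $d_k^*(A-\theta_k I)w_{k+1}=0$. Substituting $(A-\theta_k I)w_{k+1}=u_k+\xi_k d_k$ and using $\|d_k\|=1$ gives the clean identity $u_k^*d_k=-\xi_k$, a feature peculiar to MINRES that does not hold for a generic solver. Expanding the left side in the decompositions of $u_k$ and $d_k$, and using $x^*f_k=0$ and $e_k^*x=0$, produces
\begin{equation*}
\cos\phi_k\cos\psi_k=-\xi_k-\sin\phi_k\sin\psi_k(e_k^*f_k)=-\xi_k+O(\sin^2\phi_k),
\end{equation*}
where the error term is controlled using \eqref{sinpsi} and $|e_k^*f_k|\le1$. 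Under $\xi_k>\sin\phi_k$, the right side is strictly negative for all sufficiently small $\sin\phi_k$, so $\cos\psi_k<0$; combined with $|\cos\psi_k|=1-O(\sin^2\phi_k)$ from \eqref{cospsi} this forces \eqref{cospsik}, and plugging $\cos\psi_k$ and $\sin\psi_k$ back into $d_k=x\cos\psi_k+f_k\sin\psi_k$ immediately gives \eqref{dkerror}.

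The main obstacle, I expect, is not any single calculation but rather identifying which piece of MINRES structure does the work at each step: the first orthogonality controls only $|\cos\psi_k|$ and leaves the sign ambiguous, so the cleanest route to the sign is through the auxiliary identity $u_k^*d_k=-\xi_k$, which is MINRES-specific. Once this identity and the spectral-gap inequalities \eqref{order}, \eqref{sep1}, \eqref{error1} are in hand, everything reduces to careful bookkeeping.
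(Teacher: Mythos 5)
Your proof is correct, and its first half is essentially the paper's own argument: the same Galerkin orthogonality $d_k\perp(A-\theta_k I)u_k$, the same exact relation \eqref{tanpsi}, and the same use of \eqref{assump2}, $\|(A-\theta_k I)e_k\|\ge|\lambda_2-\theta_k|>\tfrac12|\lambda_2-\lambda|$ and \eqref{error1} to get \eqref{sinpsi}--\eqref{cospsi}. The sign step is where you deviate, but only in bookkeeping: the paper uses the projection (Pythagoras) relation $\xi_k^2+\|(A-\theta_k I)w_{k+1}\|^2=1$ (its \eqref{minpro}), expands $\|(A-\theta_k I)w_{k+1}\|^2$ through the orthogonal decomposition, and solves $(\cos\phi_k+\xi_k\cos\psi_k)^2+\xi_k^2\le 1$ for $\xi_k\cos\psi_k$, while you use the scalar identity $u_k^*d_k=-\xi_k$; these are equivalent, since expanding $\|u_k+\xi_k d_k\|^2=1-\xi_k^2$ gives exactly $\mathrm{Re}\,(u_k^*d_k)=-\xi_k$, so both rest on the same MINRES optimality property. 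Two small remarks. First, you do not need \eqref{sinpsi} nor the restriction to ``sufficiently small $\sin\phi_k$'' in the sign argument: the trivial bounds $\sin\psi_k\le 1$ and $|e_k^*f_k|\le 1$ already give $|\sin\phi_k\sin\psi_k\,e_k^*f_k|\le\sin\phi_k<\xi_k$, hence $\cos\phi_k\cos\psi_k\le-\xi_k+\sin\phi_k<0$ whenever $\xi_k>\sin\phi_k$, which recovers the paper's non-asymptotic claim that $\cos\psi_k$ and $\cos\phi_k$ have opposite signs (the asymptotic hypothesis \eqref{assump1} is then needed only for the magnitude statement \eqref{cospsi}, as in the paper). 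Second, in the complex Hermitian case the cross terms should be read with real parts (e.g.\ $\mathrm{Re}\,(f_k^*e_k)$), a cosmetic point the paper glosses over as well.
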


\begin{proof}
Note that for MINRES its residual $\xi_kd_k$ satisfies
$\xi_kd_k\perp (A-\theta_k I)\mathcal{K}_m(A,u_k)$. Therefore, we
specially have $\xi_k d_k \perp (A-\theta_k I) u_k$, i.e., $d_k
\perp (A-\theta_k I) u_k$. Then from
(\ref{EqIRQIDecompositoinOfu_k}) and
(\ref{EqIRQIDecompositoinOfd_k}) we obtain
$$
(\lambda-\theta_k)\cos\phi_k\cos\psi_k+f_k^*(A-\theta_kI)e_k\sin\phi_k\sin\psi_k=0.
$$
So
\begin{equation}
\tan\psi_k=\frac{(\theta_k-\lambda)\cos\phi_k}{f_k^*(A-\theta_kI)e_k\sin\phi_k}.
\label{tanpsi}
\end{equation}
By $|\lambda-\theta_k|< \frac{|\lambda-\lambda_2|}{2}$, we get
\begin{eqnarray*}
|f_k^*(A-\theta_k I)e_k|&=&\|(A-\theta _k I)e_k\||\cos\varphi_k|\\
&\geq& \|(A-\theta _k I)e_k\||\cos\varphi|\\
&\geq&|\lambda_2-\theta_k||\cos\varphi|\\
&>&\frac{|\lambda_2-\lambda|}{2}|\cos\varphi|.
\end{eqnarray*}
Using (\ref{error1}), we obtain from (\ref{tanpsi})
\begin{eqnarray*}
|\tan\psi_k|&\leq&\frac{(\lambda_{\max}-\lambda_{\min})\sin\phi_k\cos\phi_k}
{|f_k^*(A-\theta_k I)e_k|}\\
&\leq&\frac{2(\lambda_{\max}-\lambda_{\min})}{|\lambda_2-\lambda||\cos\varphi|}
\sin\phi_k\cos\phi_k\\
&\leq&\frac{2\beta}{|\cos\varphi|}\sin\phi_k.
\end{eqnarray*}
Therefore, (\ref{sinpsi}) holds. Note that (\ref{sinpsi}) means
$\sin\psi_k=O(\sin\phi_k)$. So we get
$$
\cos\psi_k=\pm\sqrt{1-\sin^2\psi_k}=\pm
(1-\frac{1}{2}\sin^2\psi_k)+O(\sin^4\psi_k)=\pm (1-O(\sin^2\phi_k))
$$
by dropping the higher order term $O(\sin^4\phi_k)$.

Now we prove that  $\cos\psi_k$ and $\cos\phi_k$ must have opposite
signs if $\xi_k>\sin\phi_k$. Since the MINRES residual
$$
\xi_kd_k=(A-\theta_k I)w_{k+1}-u_k,
$$
by its residual minimization property we know that $(A-\theta_k
I)w_{k+1}$ is just the orthogonal projection of $u_k$ onto
$(A-\theta_k I){\cal K}_m(A,u_k)$ and $\xi_kd_k$ is orthogonal to
$(A-\theta_k I)w_{k+1}$. Therefore, we get
\begin{equation}
\xi_k^2+\|(A-\theta_k I)w_{k+1}\|^2=\|u_k\|^2=1.\label{minpro}
\end{equation}

Note that
$$
(A-\theta_k I)w_{k+1}=u_k+\xi_k d_k=(\cos\phi_k+\xi_k\cos\psi_k)x
+(e_k\sin\phi_k+\xi_kf_k\sin\psi_k)
$$
is an orthogonal direct sum decomposition of $(A-\theta_k
I)w_{k+1}$. Therefore, we have
$$
\|(A-\theta_k I)w_{k+1}\|^2=(\cos\phi_k+\xi_k\cos\psi_k)^2
+\|e_k\sin\phi_k+\xi_kf_k\sin\psi_k\|^2,
$$
which, together with (\ref{minpro}), gives
$$
(\cos\phi_k+\xi_k\cos\psi_k)^2+\xi_k^2\leq 1.
$$
Solving it for $\xi_k\cos\psi_k$, we have
$$
-\cos\phi_k-\sqrt{1-\xi_k^2}\leq \xi_k\cos\psi_k\leq
-\cos\phi_k+\sqrt{1-\xi_k^2},
$$
in which the upper bound is negative provided that
$-\cos\phi_k+\sqrt{1-\xi_k^2}<0$, which means $\xi_k>\sin\phi_k$. So
$\cos\psi_k$ and $\cos\phi_k$ must have opposite signs if
$\xi_k>\sin\phi_k$. Hence, it follows from \eqref{cospsi} that
\eqref{cospsik} holds if $\xi_k>\sin\phi_k$. Combining
(\ref{cospsi}) with \eqref{EqIRQIDecompositoinOfd_k} and
\eqref{sinpsi} gives \eqref{dkerror}.
\end{proof}

Since $u_k$ is assumed to be a reasonably good approximation to $x$,
$\sin\phi_k$ is small. As a result, the condition $\xi_k>\sin\phi_k$
is easily satisfied unless the linear system is solved with very
high accuracy.

Clearly, how general this theorem depends on how general assumption
(\ref{assump1}) is. Next we give a qualitative analysis to show
that this assumption is very reasonable and holds generally, indicating that
the theorem is of rationale and generality.

Note that by definition we have
$$
e_k \mbox{ and }f_k\in {\rm span}\{x_2,x_3,\ldots,x_n\},
$$
so does
$$
(A-\theta_kI)e_k\in {\rm span}\{x_2,x_3,\ldots,x_n\}.
$$
We now justify the generality of $e_k$ and $f_k$. In the proof of
cubic convergence of RQI, which is the inexact RQI with $\xi_k=0$,
Parlett \cite[p. 78-79]{ParlettSEP} proves that $e_k$ will start to
converge to $x_2$ only after $u_k$ has converged to $x$ and
$e_k\rightarrow x_2$ holds for large enough $k$. In other words,
$e_k$ is a general combination of $x_2,x_3,\ldots,x_n$ and does not
start to converge before $u_k$ has converged. Following his proof
path, we have only two possibilities on $e_k$ in the inexact RQI
with MINRES: One is that $e_k$, at best, can possibly start to approach $x_2$
only if $u_k$ has converged to $x$; the other is that $e_k$ is nothing
but just still a general linear combination of $x_2,x_3,\ldots,x_n$ and
does not converge to any specific vector for any $\xi_k$. In either
case, $e_k$ is indeed a general linear combination of $x_2,x_3,\ldots,x_n$
before $u_k$ has converged.

Expand the unit length $e_k$ as
$$
e_k=\sum_{j=2}^n\alpha_jx_j
$$
with $\sum_{j=2}^n\alpha_j^2=1$. Then, based on the above arguments,
no $\alpha_j$ is small before $u_k$ has converged. Note that
$$
(A-\theta_k I)e_k=\sum_{j=2}^n\alpha_j (\lambda_j-\theta_k)x_j.
$$
Since $\theta_k$ supposed to be a reasonably good approximation to
$\lambda$, for $j=2,3,\ldots,n$, $\lambda_j-\theta_k$ are not small,
so that $(A-\theta_k I)e_k$ is a general linear combination of
$x_2,x_3,\ldots,x_n$.

Let $p_m(z)$ be the residual polynomial of MINRES applied to
(\ref{EqERQILinearEquation1}). Then it is known \cite{paige} that
$p_m(0)=1$, its $m$ roots are the harmonic values of $A-\theta_k I$
with respect to ${\cal K}_m(A,u_k)$. From
(\ref{EqIRQIDecompositoinOfu_k}) and
(\ref{EqIRQIDecompositoinOfd_k}) we can write the residual
$\xi_kd_k$ as
\begin{eqnarray*}
\xi_k d_k&=&p_m(A-\theta_k I)u_k=p_m(A-\theta_k I)(x\cos\phi_k+
e_k\sin\phi_k)\\
&=& \cos\phi_kp_m(\lambda-\theta_k)x+\sin\phi_k
\sum_{j=2}^n\alpha_jp_m(\lambda_j-\theta_k)x_j\\
&=&\xi_k(x\cos\psi_k+f_k\sin\psi_k).
\end{eqnarray*}
Noting that $\|f_k\|=1$, we get
$$
f_k=\frac{p_m(A-\theta_k
I)e_k}{\|p_m(A-\theta_kI)e_k\|}=\frac{\sum_{j=2}^n\alpha_j
p_m(\lambda_j-\theta_k)x_j}
{(\sum_{j=2}^n\alpha_j^2p_m^2 (\lambda_j-\theta_k))^{1/2}}.
$$
Since $u_k$ is rich in $x$ and has small components in
$x_2,x_3,\ldots,x_n$, ${\cal K}_m(A,u_k)$ contains not much
information on $x_2,x_3,\ldots,x_n$ unless $m$ is large enough.
Therefore, as approximations to the eigenvalues
$\lambda_2-\theta_k,\lambda_3-\theta_k,\ldots,\lambda_n-\theta_k$ of
the matrix $A-\theta_kI$, the harmonic Ritz values are generally of
poor quality unless $m$ is large enough. By continuity,
$p_m(\lambda_j-\theta_k),\,j=2,3,\ldots,n$ are generally not near
zero. This means that usually $f_k$ is a general linear combination of
$x_2,x_3,\ldots,x_n$, which is the case for $\xi_k$ not very small.
We point out that $p_m(\lambda_j - \theta_k),\ j = 2,3,...,n$ can be
possibly near zero only if $\xi_k$ is small enough, but in this case
the cubic convergence of the outer iteration can be established
trivially, according to Theorem~\ref{ThmIRQIQuadraticConvergence}.
For other discussions on $\xi_k$ and harmonic Ritz values, we refer
to \cite{xueelman}.

In view of the above, it is very unlikely for $f_k$ and $(A-\theta_k
I)e_k$ to be nearly orthogonal, that is, $\varphi_k$ is rarely near
$\frac{\pi}{2}$. So, $|\cos\varphi_k|$ should be uniformly away from
zero in general, and assumption (\ref{assump1}) is very general and
reasonable.

Obviously, $|\cos\varphi_k|$ is a-priori and cannot be computed in
practice. For test purposes, for each matrix in
Section~\ref{testminres} and some others, supposing the $x$'s are
known, we have computed $|\cos\varphi_k|$ for $\xi_k=O(\|r_k\|)$,
$\xi_k\leq \xi<1$ with $\xi$ a fixed constant, $\xi_k=1-(\|r_k\|)$
and $\xi_k=1-O(\|r_k\|^2)$, respectively. As will be seen, the
latter three requirements on $\xi_k$ are our new cubic, quadratic
and linear convergence conditions for the inexact RQI with MINRES
that will be derived by combining this theorem with
Theorems~\ref{ThmIRQIQuadraticConvergence}--\ref{resbound} and are
critically need the assumption that $|\cos\varphi_k|$ is uniformly
away from zero, independent of $k$. Among thousands
$|\cos\varphi_k|$'s, we have found that most of the
$|\cos\varphi_k|$'s are considerably away from zero and very few
smallest ones are around $10^{-4}$. Furthermore, we have found
that their arithmetic mean is basically $0.016\sim 0.020$ for each
matrix and a given choice of $\xi_k$ above. To highlight these
results and to be more illustrative, we have extensively computed
$|\cos\angle(w,v)|=\frac{|w^*v|}{\|w\|\|v\|}$ with many vector pairs
$(w,v)$ of numerous dimensions no less than 1000 that were generated
randomly in a normal distribution. We have observed that the values
of our $|\cos\varphi_k|$'s and their arithmetic mean for each matrix
and a given choice of $\xi_k$ above have very similar behavior to
those of thousands $|\cos\angle(w,v)|$'s and the arithmetic mean
for a given dimension. As a
consequence, this demonstrates that assumption~(\ref{assump1}) are
very like requiring the uniform non-orthogonality of two normal
random vectors and thus should hold generally. In other words,
$f_k,e_k$ and $(A-\theta_k I)e_k$ are usually indeed general
linear combinations of $x_2,\ldots,x_n$ and have the nature as vectors
generated in a normal distribution, so assumption~(\ref{assump1})
should hold generally. On the other hand, our later
numerical experiments also confirm the cubic, quadratic and linear
convergence under our corresponding new conditions  This means that
the numerical experiments also justify the rationale and generality
of the assumption a-posteriori.

%In finite precision arithmetic, since $u_k\rightarrow x$,
%$\cos\phi_k\rightarrow 1$ and $\sin\phi_k\rightarrow 0$, it is seen
%from (\ref{EqIRQIDecompositoinOfu_k}) and
%(\ref{EqIRQIDecompositoinOfd_k}) that there is loss of accuracy in
%the computation of $e_k$ due to the cancelation and division by a
%small number in $e_k=(u_k-x\cos\phi_k)/\sin\phi_k$. Such loss of
%accuracy may also occur to the computation of
%$f_k=(u_k-x\cos\psi_k)/\sin\psi_k$. Therefore, due to round-off
%errors, one might think that computing
%$|\cos\varphi_k|=\frac{|f_k^*(A-\theta_k
%I)e_k|}{\|(A-\theta_kI)e_k\|}$ directly may not be a very reliable
%way to justify assumption (\ref{assump1}) numerically. However,
%provided that $\sin\phi_k,\sin\psi_k\gg\epsilon_{\rm mach}$, the machine
%precision, a simple round error analysis shows
%that such loss of accuracy has no essential effects on the value of
%$|\cos\varphi_k|$ once $\sin\phi_k,\sin\psi_k\gg \epsilon_{\rm
%mach}$. In fact, it can be shown that the difference between the
%computed $|\cos\varphi_k|$ and the true $|\cos\varphi_k|$ is of
%$\max\{O(\epsilon_{\rm mach}/\sin\phi_k),O(\epsilon_{\rm mach}/\sin\psi_k)\}$.

Based on Theorem~\ref{minres} and
Theorems~\ref{ThmIRQIQuadraticConvergence}--\ref{resbound},
we now present new convergence results on the inexact RQI with MINRES.

\begin{theorem}\label{minrescubic}
With $\cos\varphi$ defined as in Theorem~{\rm \ref{minres}},
assuming that $|\cos\varphi|$ is uniformly away from zero, the
inexact RQI with MINRES asymptotically converges cubically:
\begin{equation}
\tan\phi_{k+1}\leq\frac{2\beta
(|\cos\varphi|+2\xi_k\beta)}{(1-\xi_k)|\cos\varphi|}\sin^3\phi_k
\label{cubic}
\end{equation}
if $\xi_k\leq\xi$ with a fixed $\xi$ not near one; it converges
quadratically:
\begin{equation}
\tan\phi_{k+1}\leq\eta\sin^2\phi_k \label{quadminres}
\end{equation}
if $\xi_k$ is near one and bounded by
\begin{equation}
\xi_k\leq 1-\frac{6\beta^2\sin\phi_k}{\eta|\cos\varphi|}
\label{minresquad}
\end{equation}
with $\eta$ a moderate constant; it converges linearly at least:
\begin{equation}
\tan\phi_{k+1}\leq\zeta\sin\phi_k \label{linminres}
\end{equation}
with a constant $\zeta<1$ independent of $k$ if $\xi_k$ is bounded
by
\begin{equation}
1-\frac{6\beta^2\sin\phi_k}{\eta|\cos\varphi|}<\xi_k\leq
1-\frac{6\beta^2\sin^2\phi_k}{\zeta|\cos\varphi|}. \label{minreslin}
\end{equation}
\end{theorem}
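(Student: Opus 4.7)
The plan is to combine the general inequality (\ref{bound1}) from Theorem~\ref{ThmIRQIQuadraticConvergence} with the MINRES-specific estimates of Theorem~\ref{minres} on $\sin\psi_k$ and $\cos\psi_k$, obtain a single master inequality, and then specialize it to the three nested regimes of $\xi_k$ to read off the cubic, quadratic and linear rates.

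First I would substitute \eqref{sinpsi} into the numerator of (\ref{bound1}) to get
$$\sin\phi_k+\xi_k\sin\psi_k\leq\frac{(|\cos\varphi|+2\xi_k\beta)\sin\phi_k}{|\cos\varphi|}.$$
For the denominator, the worst case is when $\cos\psi_k$ and $\cos\phi_k$ have opposite signs, since if both are positive then $|\cos\phi_k+\xi_k\cos\psi_k|$ is asymptotically near $1+\xi_k$ and the bound is trivial. Under the mild condition $\xi_k>\sin\phi_k$ (which holds automatically in the quadratic and linear regimes, where $\xi_k\to 1$, and in the cubic regime whenever $\xi_k$ is not itself driven to zero), \eqref{cospsik} gives $\xi_k\cos\psi_k=-\xi_k+O(\sin^2\phi_k)$ while $\cos\phi_k=1-O(\sin^2\phi_k)$, so
$$|\cos\phi_k+\xi_k\cos\psi_k|\geq(1-\xi_k)-O(\sin^2\phi_k).$$
Inserting the two estimates into (\ref{bound1}) yields the master inequality
$$\tan\phi_{k+1}\leq\frac{2\beta(|\cos\varphi|+2\xi_k\beta)}{(1-\xi_k)|\cos\varphi|}\sin^3\phi_k,$$
up to asymptotically negligible higher-order terms. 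When $\xi_k\leq\xi<1$ with $\xi$ not near one, $1-\xi_k\geq 1-\xi$ is a positive constant and (\ref{cubic}) follows.

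The quadratic and linear cases reduce to calibrating $1-\xi_k$ against $\sin\phi_k$. Using $|\cos\varphi|\leq 1\leq\beta$ (since $\beta=(\lambda_{\max}-\lambda_{\min})/|\lambda_2-\lambda|\geq 1$) and $\xi_k\leq 1$, I would loosen the numerator to $|\cos\varphi|+2\xi_k\beta\leq 3\beta$ to obtain the uniform form
$$\tan\phi_{k+1}\leq\frac{6\beta^2}{(1-\xi_k)|\cos\varphi|}\sin^3\phi_k.$$
Then imposing $1-\xi_k\geq\tfrac{6\beta^2\sin\phi_k}{\eta|\cos\varphi|}$, which is exactly (\ref{minresquad}), consumes one power of $\sin\phi_k$ and gives (\ref{quadminres}); imposing the stricter $1-\xi_k\geq\tfrac{6\beta^2\sin^2\phi_k}{\zeta|\cos\varphi|}$ from the right-hand side of (\ref{minreslin}) consumes two powers and gives (\ref{linminres}). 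The main obstacle sits in the linear regime: there $1-\xi_k$ is itself only $O(\sin^2\phi_k)$, precisely the order of the correction dropped from the denominator, so one must keep the $O(\sin^2\phi_k)$ term in $|\cos\phi_k+\xi_k\cos\psi_k|$ explicit and verify that the constant $6$ in the threshold is large enough to guarantee the net denominator stays strictly positive and of the claimed asymptotic size. Once this bookkeeping is settled the three inequalities drop out by direct substitution.
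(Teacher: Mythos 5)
Your proposal is correct and follows essentially the same route as the paper's proof: it combines (\ref{bound1}) with the MINRES estimates \eqref{sinpsi} and (\ref{cospsi}) to get $|\cos\phi_k+\xi_k\cos\psi_k|\geq 1-\xi_k$ asymptotically, derives the master bound (\ref{cubic}), and then uses $|\cos\varphi|+2\xi_k\beta\leq 3\beta$ to calibrate $1-\xi_k$ against $\sin\phi_k$ and $\sin^2\phi_k$ for the quadratic and linear regimes. Your remark about the $O(\sin^2\phi_k)$ bookkeeping in the linear regime is a fair point, but the paper resolves it exactly as you suggest implicitly, by treating the statements asymptotically and dropping those higher-order terms.
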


\begin{proof}
Based on Theorem~\ref{minres}, we have
\begin{eqnarray}
\cos\phi_k+\xi_k\cos\psi_k&=&1-\frac{1}{2}\sin^2\phi_k\pm\xi_k
(1-O(\sin^2\phi_k))\nonumber\\
&=&1\pm\xi_k+O(\sin^2\phi_k)\nonumber\\
&=&1\pm\xi_k\geq 1-\xi_k \label{denominres}
\end{eqnarray}
by dropping the higher order term $O(\sin^2\phi_k)$. Therefore,
the uniform positiveness condition holds provided that
$\xi_k\leq\xi$ with a fixed $\xi$ not near one. Combining
(\ref{bound1}) with \eqref{sinpsi} and (\ref{cospsi}), we get
$$
\tan\phi_{k+1}\leq 2\beta\frac{1+\xi_k \frac{2\beta}{|\cos\varphi|}}
{1-\xi_k}\sin^3\phi_k,
$$
which is just (\ref{cubic}) and shows the cubic convergence of the
inexact RQI with MINRES if $\xi_k\leq \xi$ with a fixed $\xi$ not
near one.

Next we prove the quadratic convergence result. Since $\xi_k<1$ and
$\beta\geq 1$, it follows from (\ref{sinpsi}) and (\ref{denominres})
that asymptotically
\begin{eqnarray*}
2\beta\frac{\sin \phi_k + \xi_k \sin \psi_k} {|\cos \phi_k + \xi_k
\cos \psi_k|}&<&2\beta\frac{(1+2\beta/|\cos\varphi|)\sin\phi_k}
{\cos\phi_k+\xi_k\cos\psi_k}\\
&\leq&\frac{6\beta^2\sin\phi_k}
{(\cos\phi_k+\xi_k\cos\psi_k)|\cos\varphi|}\\
&\leq&\frac{6\beta^2\sin\phi_k}{(1-\xi_k)|\cos\varphi|}.
\end{eqnarray*}
So from (\ref{bound1}) the inexact RQI with MINRES converges
quadratically and (\ref{quadminres}) holds if
$$
\frac{6\beta^2\sin\phi_k} {(1-\xi_k)|\cos\varphi|}\leq\eta
$$
for a moderate constant $\eta$ independent of $k$. Solving this
inequality for $\xi_k$ gives (\ref{minresquad}).

Finally, we prove the linear convergence result. Analogously, we
have
$$
2\beta\frac{\sin\phi_k + \xi_k \sin\psi_k}{|\cos \phi_k + \xi_k \cos
\psi_k|}\sin\phi_k
<\frac{6\beta^2\sin^2\phi_k}{(1-\xi_k)|\cos\varphi|}.
$$
So it follows from (\ref{bound1}) that the inexact RQI with MINRES
converges linearly at least and (\ref{linminres}) holds when
$$
\frac{6\beta^2\sin^2\phi_k}{(1-\xi_k)|\cos\varphi|}\leq \zeta<1
$$
with a constant $\zeta$ independent of $k$. Solving the above
inequality for $\xi_k$ gives (\ref{minreslin}).
\end{proof}

Theorem~\ref{minrescubic} presents the convergence results in terms
of the a priori uncomputable $\sin\phi_k$. We next derive their
counterparts in terms of the a posteriori computable $\|r_k\|$, so
that they are of practical value as much as possible and can be used
to best control inner-outer accuracy to achieve a desired
convergence rate.

\begin{theorem}\label{minres2}
With $\cos\varphi$ defined as in Theorem~{\rm \ref{minres}},
assuming that $|\cos\varphi|$ is uniformly away from zero, the
inexact RQI with MINRES asymptotically converges cubically:
\begin{equation}
\|r_{k+1}\|\leq\frac{16\beta^2
(|\cos\varphi|+2\xi_k\beta)}{(1-\xi_k)(\lambda_2-\lambda)^2|\cos\varphi|}
\|r_k\|^3 \label{cubicres}
\end{equation}
if $\xi_k\leq\xi$ with a fixed $\xi$ not near one; it converges
quadratically:
\begin{equation}
\|r_{k+1}\|\leq\frac{4\beta\eta}{|\lambda_2-\lambda|}\|r_k\|^2
\label{quadres}
\end{equation}
if $\xi_k$ is near one and bounded by
\begin{equation}
\xi_k\leq 1-\frac{6\beta\|r_k\|}{\eta
|\lambda_2-\lambda||\cos\varphi|} \label{quadracond}
\end{equation}
with $\eta$ a moderate constant; it converges at linear factor
$\zeta$ at least:
\begin{equation}
\|r_{k+1}\|\leq\zeta\|r_k\| \label{linresmono}
\end{equation}
if $\xi_k$ is bounded by
\begin{equation}
1-\frac{6\beta\|r_k\|}{\eta
|\lambda_2-\lambda||\cos\varphi|}<\xi_k\leq
1-\frac{48\beta^3\|r_k\|^2}{\zeta(\lambda_2-\lambda)^2|\cos\varphi|}
\label{linrescond}
\end{equation}
with a constant $\zeta<1$ independent of $k$.
\end{theorem}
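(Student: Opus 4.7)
The plan is to deduce Theorem~\ref{minres2} directly from Theorem~\ref{minrescubic} by translating every inequality that involves $\sin\phi_k$ into one that involves $\|r_k\|$, using the two-sided bound
\[
\frac{\|r_k\|}{\lambda_{\max}-\lambda_{\min}}\;\le\;\sin\phi_k\;\le\;\frac{2\|r_k\|}{|\lambda_2-\lambda|}
\]
established in (\ref{parlett}), together with the standing abbreviation $\beta=(\lambda_{\max}-\lambda_{\min})/|\lambda_2-\lambda|$ and the elementary inequality $\sin\phi_{k+1}\le\tan\phi_{k+1}$. Since Theorem~\ref{minrescubic} has already done the analytical work of bounding $\tan\phi_{k+1}$, no new spectral estimates are needed; the proof is mainly a careful accounting of constants.

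First I would treat the cubic case. Starting from (\ref{cubic}) and using $\|r_{k+1}\|\le(\lambda_{\max}-\lambda_{\min})\sin\phi_{k+1}\le(\lambda_{\max}-\lambda_{\min})\tan\phi_{k+1}$ for the left-hand side, and $\sin^3\phi_k\le 8\|r_k\|^3/|\lambda_2-\lambda|^3$ on the right, gives
\[
\|r_{k+1}\|\;\le\;\frac{16\beta(\lambda_{\max}-\lambda_{\min})(|\cos\varphi|+2\xi_k\beta)}{(1-\xi_k)\,|\lambda_2-\lambda|^3\,|\cos\varphi|}\,\|r_k\|^3,
\]
and one more use of $\beta=(\lambda_{\max}-\lambda_{\min})/|\lambda_2-\lambda|$ collapses the front factor to the one appearing in (\ref{cubicres}). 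The quadratic case (\ref{quadres}) is the same calculation applied to (\ref{quadminres}), with $\sin^2\phi_k\le 4\|r_k\|^2/|\lambda_2-\lambda|^2$ producing the constant $4\beta\eta/|\lambda_2-\lambda|$. The linear bound (\ref{linresmono}) follows from (\ref{linminres}) in the same way, with $\zeta$ absorbing the constants because $\|r_{k+1}\|\le(\lambda_{\max}-\lambda_{\min})\tan\phi_{k+1}\le(\lambda_{\max}-\lambda_{\min})\zeta\sin\phi_k$ and the right-hand side is already dominated by a multiple of $\|r_k\|$.

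The slightly delicate step, and the one I expect to require the most care, is translating the admissible ranges (\ref{minresquad}) and (\ref{minreslin}) for $\xi_k$ into the residual-based ranges (\ref{quadracond}) and (\ref{linrescond}). Here the direction of the inequality is opposite to what appears in the bounds above: to guarantee the condition $\xi_k\le 1-\tfrac{6\beta^2\sin\phi_k}{\eta|\cos\varphi|}$ from an a posteriori condition on $\|r_k\|$, I need an upper bound on $\sin\phi_k$ in terms of $\|r_k\|$ so that the right-hand side of the new condition is no larger than the right-hand side of the old one. The appropriate substitution is the upper piece of (\ref{parlett}), $\sin\phi_k\le 2\|r_k\|/|\lambda_2-\lambda|$, which turns $6\beta^2\sin\phi_k/(\eta|\cos\varphi|)$ into a quantity bounded by a constant multiple of $\beta\|r_k\|/(\eta|\lambda_2-\lambda||\cos\varphi|)$, yielding (\ref{quadracond}); the same substitution applied to $\sin^2\phi_k$ in (\ref{minreslin}) gives the upper endpoint in (\ref{linrescond}), while the lower endpoint is just the upper endpoint of the quadratic range, ensuring the two ranges dovetail.

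Overall the proof is essentially book-keeping: apply Theorem~\ref{minrescubic}, sandwich $\|r_{k+1}\|$ between $\sin\phi_{k+1}$ and $\tan\phi_{k+1}$ via (\ref{parlett}), replace $\sin^j\phi_k$ by its residual upper bound in the convergence estimates, and use the residual upper bound in the opposite direction to convert the conditions on $\xi_k$. The only place where one has to be attentive is keeping track of which direction of (\ref{parlett}) is being used so that the constants collect into the exact form stated in (\ref{cubicres})--(\ref{linrescond}).
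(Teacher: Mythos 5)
Your overall strategy---push Theorem~\ref{minrescubic} through the two-sided bound (\ref{parlett})---is indeed the paper's strategy, and your treatment of the cubic and quadratic rate bounds is correct: the constants in (\ref{cubicres}) and (\ref{quadres}) come out exactly as you compute them. The gap is in the linear case. Translating (\ref{linminres}) via $\|r_{k+1}\|\le(\lambda_{\max}-\lambda_{\min})\tan\phi_{k+1}$ and $\sin\phi_k\le 2\|r_k\|/|\lambda_2-\lambda|$ gives only $\|r_{k+1}\|\le 2\beta\zeta\|r_k\|$, and since $2\beta\ge 2$ this is not a contraction; saying that ``$\zeta$ absorbs the constants'' is not innocuous here, because the content of (\ref{linresmono}) is a monotone decrease with the \emph{same} factor $\zeta<1$ that appears in the tolerance bound (\ref{linrescond}), whose upper endpoint ($48\beta^3$, $(\lambda_2-\lambda)^2$) you must also reproduce. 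The paper does not use (\ref{linminres}) at all for this part: it starts from the residual-form cubic bound (\ref{cubicres}), bounds $|\cos\varphi|+2\xi_k\beta\le 3\beta$, and requires $\frac{48\beta^3\|r_k\|^2}{(1-\xi_k)(\lambda_2-\lambda)^2|\cos\varphi|}\le\zeta<1$, which immediately gives $\|r_{k+1}\|\le\zeta\|r_k\|$ and, solved for $\xi_k$, exactly the upper endpoint of (\ref{linrescond}). Your route can be repaired---apply the linear case of Theorem~\ref{minrescubic} with factor $\zeta/(2\beta)$ in place of $\zeta$, after which the substitution $\sin^2\phi_k\le 4\|r_k\|^2/|\lambda_2-\lambda|^2$ reproduces the $48\beta^3$ constant and the conclusion $\|r_{k+1}\|\le\zeta\|r_k\|$---but that rescaling step is missing from the proposal as written.

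A smaller but real discrepancy occurs in the quadratic tolerance condition. Substituting the worst case $\sin\phi_k\le 2\|r_k\|/|\lambda_2-\lambda|$ into (\ref{minresquad}), as you propose, yields $\xi_k\le 1-\frac{12\beta^2\|r_k\|}{\eta|\lambda_2-\lambda||\cos\varphi|}$, which is strictly more restrictive than the stated (\ref{quadracond}) (which has $6\beta$, and $\beta\ge 1$); your phrase ``a constant multiple of $\beta\|r_k\|/(\eta|\lambda_2-\lambda||\cos\varphi|)$'' glosses over precisely the constant the theorem asserts. The paper arrives at $6\beta$ by writing $\sin\phi_k=\|r_k\|/C$ with $|\lambda_2-\lambda|/2\le C\le\lambda_{\max}-\lambda_{\min}$ and then evaluating the resulting bound at $C=\lambda_{\max}-\lambda_{\min}$ rather than at the worst case, i.e., by a looser accounting than yours. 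So if you insist on the worst-case substitution you prove the quadratic statement only with a larger constant in (\ref{quadracond}); you should either adopt the paper's $C$-parametrization explicitly or state that your condition differs from (\ref{quadracond}) by this factor.
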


\begin{proof}
From (\ref{parlett}) we have
$$
\frac{\|r_{k+1}\|}{\lambda_{\max}-\lambda_{\min}}\leq\sin\phi_{k+1}\leq\tan\phi_{k+1},\
\sin\phi_k\leq\frac{2\|r_k\|}{|\lambda_2-\lambda|}.
$$
So (\ref{cubicres}) is direct from (\ref{cubic}) by a simple
manipulation. Next we use
(\ref{parlett}) to denote $\sin\phi_k=\frac{\|r_k\|}{C}$ with
$\frac{|\lambda_2-\lambda|}{2}\leq
C\leq\lambda_{\max}-\lambda_{\min}$. Note that
$$
2\beta\frac{\sin \phi_k + \xi_k \sin \psi_k} {|\cos \phi_k + \xi_k
\cos \psi_k|}<\frac{6\beta^2\sin\phi_k} {(1-\xi_k)|\cos\varphi|}=
\frac{6\beta^2\|r_k\|}{C(1-\xi_k)|\cos\varphi|}
$$
Therefore, similarly to the proof of Theorem~\ref{minrescubic},
if
$$
\frac{6\beta^2\|r_k\|}{C(1-\xi_k)|\cos\varphi|}\leq \eta,
$$
the inexact RQI with MINRES converges quadratically.
Solving this inequality for $\xi_k$ gives
\begin{equation}
\xi_k\leq
1-\frac{6\beta^2\|r_k\|}{C\eta|\cos\varphi|}.\label{boundxi}
\end{equation}
Note that
\begin{eqnarray*}
1-\frac{6\beta^2\|r_k\|}{C\eta|\cos\varphi|} &\geq&
1-\frac{6\beta^2\|r_k\|}{\eta(\lambda_{\max}-\lambda_{\min})|\cos\varphi|}\\
&=&1-\frac{6\beta\|r_k\|} {\eta|\lambda_2-\lambda||\cos\varphi|}.
\end{eqnarray*}
So, if $\xi_k$ satisfies (\ref{quadracond}), then it satisfies
(\ref{boundxi}) too. Therefore, the inexact RQI with MINRES
converges quadratically if (\ref{quadracond}) holds. Furthermore,
from (\ref{parlett}) we have
$\frac{\|r_{k+1}\|}{\lambda_{\max}-\lambda_{\min}}\leq\sin\phi_{k+1}\leq\tan\phi_{k+1}$.
As a result, from (\ref{quadminres}) we obtain
\begin{eqnarray*}
\|r_{k+1}\|&\leq&(\lambda_{\max}-\lambda_{\min})\eta\frac{\|r_k\|^2}{C^2}\\
&\leq&\frac{4\beta\eta}{|\lambda_2-\lambda|}\|r_k\|^2,
\end{eqnarray*}
proving (\ref{quadres}).

In order to make $\|r_k\|$ monotonically converge to zero linearly,
by (\ref{cubicres}) we simply set
$$
\frac{16\beta^2
(|\cos\varphi|+2\xi_k\beta)\|r_k\|^2}{(1-\xi_k)(\lambda_2-\lambda)^2|\cos\varphi|}
\leq\frac{48\beta^3\|r_k\|^2}{(1-\xi_k)
(\lambda_2-\lambda)^2|\cos\varphi|}\leq\zeta<1
$$
with $\zeta$ independent of $k$. Solving it for $\xi_k$ gives
$$
\xi_k\leq
1-\frac{48\beta^3\|r_k\|^2}{\zeta(\lambda_2-\lambda)^2|\cos\varphi|}.
$$
Combining it with (\ref{quadracond}) proves (\ref{linresmono}) and
(\ref{linrescond}).
\end{proof}

First of all, we make a comment on Theorems~\ref{minrescubic}--\ref{minres2}.
As justified previously, assumption (\ref{assump1}) is of wide
generality. Note that
Theorems~\ref{ThmIRQIQuadraticConvergence}--\ref{resbound} hold as
we always have $\cos\phi_k+\xi_k\cos\psi_k\geq 1-\xi_k$
asymptotically, independent of $\varphi_k$. Therefore, in case
$|\cos\varphi|$ is occasionally near and even zero, the inexact RQI
with MINRES converges quadratically at least provided that
$\xi_k\leq \xi$ with a fixed $\xi$ not near one.

In order to judge cubic convergence quantitatively, we should rely on
Theorem~\ref{ThmIRQIQuadraticConvergence} and
Theorem~\ref{minrescubic} (equivalently, Theorem~\ref{resbound} and
Theorem~\ref{minres2}), in which cubic convergence precisely means
that the asymptotic convergence factor
\begin{equation}
\frac{\sin\phi_{k+1}}{\sin^3\phi_k}\leq2\beta \label{rate1}
\end{equation}
for RQI and the asymptotic convergence factor
\begin{equation}
\frac{\sin\phi_{k+1}}{\sin^3\phi_k}\leq\frac{2\beta
(|\cos\phi|+2\xi_k\beta)}{(1-\xi_k)|\cos\varphi|}=
\frac{2\beta}{1-\xi_k}+\frac{4\xi_k\beta^2} {(1-\xi_k)|\cos\varphi|}
\label{rate2}
\end{equation}
for the inexact RQI with MINRES. The asymptotic factors in
(\ref{rate1})--(\ref{rate2}) do not affect the cubic convergence
rate itself but a bigger asymptotic factor affects reduction amount
at each outer iteration and more outer iterations may be needed.

Some other comments are in order. First,
the bigger $\beta$ is, the bigger the asymptotic convergence factors
are and meanwhile the considerably bigger the factor in
(\ref{rate2}) is than that in (\ref{rate1}) if $\xi_k$ is not near
zero. In this case, the cubic convergence of both RQI and the
inexact RQI may not be very visible; furthermore, RQI may need more
outer iterations, and the inexact RQI may need more outer iterations
than RQI. Second, if the factor in (\ref{rate1}) is not big, the
factor in (\ref{rate2}) differs not much with it
provided $\xi_k\leq\xi<1$ with a fixed $\xi$ not near one, so that
the inexact RQI and RQI use (almost) the same outer iterations.
Third, it is worth reminding that the factor in (\ref{rate2}) is an
estimate in the worst case, so it may often be conservative. Fourth,
as commented previously, the inexact RQI with MINRES may
behave more like quadratic in case $|\cos\varphi|$ is
occasionally very small or even zero.

Theorem~\ref{minres2} shows that $\|r_k\|$ can be used to control
$\xi_k$ in order to achieve a desired convergence rate. For cubic
convergence, at outer iteration $k$ we only need to solve the linear
system (\ref{EqERQILinearEquation1}) by MINRES with low accuracy
$\xi_k\leq\xi$. It is safe to do so with $\xi=0.1,0.5$ and even with
$\xi=0.8$. A smaller $\xi$ is not necessary and may cause much waste
at each outer iteration. Thus, we may save much computational cost,
compared with the inexact RQI with MINRES with decreasing tolerance
$\xi_k=O(\sin\phi_k)=O(\|r_k\|)$. Compared with Theorem~\ref{eshof},
another fundamental distinction is that the new quadratic
convergence results only require to solve the linear system with
very little accuracy $\xi_k=1-O(\sin\phi_k)=1-O(\|r_k\|)\approx 1$
rather than with $\xi_k\leq\xi$ not near one. They indicate that the
inexact RQI with MINRES converges quadratically provided that
$\cos\phi_k+\xi_k\cos\psi_k\approx
1-\xi_k=O(\sin\phi_k)=O(\|r_k\|)$. The results also illustrate that
the method converges linearly provided that
$\xi_k=1-O(\sin^2\phi_k)=1-O(\|r_k\|^2)$. In this case, we have
$\cos\phi_k+\xi_k\cos\psi_k\approx
1-\xi_k=O(\sin^2\phi_k)=O(\|r_k\|^2)$. So $\xi_k$ can be
increasingly closer to one as the method converges when quadratic
and linear convergence is required, and $\xi_k$ can be closer to one
for linear convergence than for quadratic convergence. These results
make it possible to design effective criteria on how to best control inner
tolerance $\xi_k$ in terms of the outer iteration accuracy $\|r_k\|$
to achieve a desired convergence rate. In addition, interestingly,
we comment that, instead of quadratic convergence as the authors of
\cite{mgs06} claimed, Table~1 in \cite{mgs06} actually showed the
same asymptotic cubic convergence of the inexact RQI with MINRES for
a fixed $\xi=0.1$ as that for decreasing tolerance
$\xi_k=O(\|r_k\|)$ in the sense of (\ref{rate1}) and (\ref{rate2}).

Below we estimate $\|w_{k+1}\|$ in (\ref{EqIRQILinearEquation1})
obtained by MINRES and establish a lower bound for it.
As a byproduct, we also present a simpler but weaker quadratic
convergence result. Note that the exact solution of
$(A-\theta_k I)w=u_k$ is
$w_{k+1}=(A-\theta_k I)^{-1}u_k$, which corresponds to $\xi_k=0$ in
(\ref{EqIRQIwk}). Therefore, with a reasonably good $u_k$,
from (\ref{EqIRQIwk}), (\ref{error1}) and (\ref{parlett}) we have
\begin{eqnarray*}
\|w_{k+1}\|&=&\frac{\cos\phi_k}{|\theta_k-\lambda|}+O(\sin\phi_k)\\
&\approx&\frac{1}{|\theta_k-\lambda|}=\|(A-\theta_kI)^{-1}\|\\
&=&O\left(\frac{1}{\sin^2\phi_k}\right)=O\left(\frac{1}{\|r_k\|^2}\right).
\end{eqnarray*}

\begin{theorem}\label{mimic}
Asymptotically we have
\begin{eqnarray}
\|w_{k+1}\|&\geq&\frac{(1-\xi_k)|\lambda_2-\lambda|}{4\beta\|r_k\|^2},\label{wk+1}\\
%\|r_{k+1}\|&\leq&\frac{\sqrt{1-\xi_k^2}}{\|w_{k+1}\|},\label{simoncini2}\\
\|r_{k+1}\|&\leq&\sqrt{\frac{1+\xi_k}{1-\xi_k}}\frac{4\beta}{|\lambda_2-\lambda|}
\|r_k\|^2. \label{resrelation}
\end{eqnarray}
Thus, the inexact RQI
with MINRES converges quadratically at least once $\xi_k$ is
not near one.
\end{theorem}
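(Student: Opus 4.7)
The plan is to derive the lower bound \eqref{wk+1} by exploiting the orthogonality in the decomposition \eqref{EqIRQIwk}, and then to obtain the quadratic bound \eqref{resrelation} by combining the MINRES minimization identity \eqref{minpro} with the Rayleigh quotient optimality of $\theta_{k+1}$.

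First I would recall that \eqref{EqIRQIwk} is an orthogonal direct sum in the Hermitian case, so
$$\|w_{k+1}\|^2 = \frac{(\cos\phi_k+\xi_k\cos\psi_k)^2}{(\lambda-\theta_k)^2} + \|(A-\theta_kI)^{-1}(e_k\sin\phi_k+\xi_kf_k\sin\psi_k)\|^2.$$
Dropping the (nonnegative) second term yields $\|w_{k+1}\|\ge|\cos\phi_k+\xi_k\cos\psi_k|/|\lambda-\theta_k|$. Since $u_k$ is a reasonably good approximation so that $\cos\phi_k=1+O(\sin^2\phi_k)$ and $|\cos\psi_k|\le 1$, the numerator is at least $1-\xi_k$ asymptotically. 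The denominator is controlled by \eqref{error1} giving $|\lambda-\theta_k|\le(\lambda_{\max}-\lambda_{\min})\sin^2\phi_k$, and \eqref{parlett} converts $\sin^2\phi_k$ into $4\|r_k\|^2/|\lambda_2-\lambda|^2$. Assembling the three estimates and using $\beta=(\lambda_{\max}-\lambda_{\min})/|\lambda_2-\lambda|$ produces precisely \eqref{wk+1}.

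For \eqref{resrelation} the key observation is that the MINRES identity \eqref{minpro} implies $\|(A-\theta_kI)w_{k+1}\|=\sqrt{1-\xi_k^2}$, hence
$$\|(A-\theta_kI)u_{k+1}\|=\frac{\|(A-\theta_kI)w_{k+1}\|}{\|w_{k+1}\|}=\frac{\sqrt{1-\xi_k^2}}{\|w_{k+1}\|}.$$
Because $\theta_{k+1}=u_{k+1}^*Au_{k+1}$ is the minimizer of $\|(A-\sigma I)u_{k+1}\|$ over $\sigma$, we have $\|r_{k+1}\|=\|(A-\theta_{k+1}I)u_{k+1}\|\le\|(A-\theta_kI)u_{k+1}\|$. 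Substituting the lower bound \eqref{wk+1} and simplifying via $\sqrt{1-\xi_k^2}/(1-\xi_k)=\sqrt{(1+\xi_k)/(1-\xi_k)}$ yields \eqref{resrelation}. The quadratic convergence claim then follows immediately: whenever $\xi_k$ is bounded away from one, the prefactor $\sqrt{(1+\xi_k)/(1-\xi_k)}\cdot 4\beta/|\lambda_2-\lambda|$ is a moderate constant.

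The step I expect to require the most care is the asymptotic lower bound $|\cos\phi_k+\xi_k\cos\psi_k|\ge 1-\xi_k$: one must be explicit that this holds \emph{asymptotically} (i.e., modulo an $O(\sin^2\phi_k)$ correction absorbed in the word ``asymptotically'' of the statement) and that no assumption on the sign of $\cos\psi_k$, and in particular no appeal to the sharper Theorem~\ref{minres} or to assumption \eqref{assump1}, is needed — only $\cos\phi_k\to 1$ and $|\cos\psi_k|\le 1$. Everything else is a direct chain of substitutions using \eqref{minpro}, \eqref{error1}, \eqref{parlett} and the Rayleigh quotient optimality, which is why this result is a genuinely weaker by-product of Theorem~\ref{minres2}: it cannot recover cubic convergence when $\xi_k=O(\|r_k\|)$, in contrast to \eqref{cubicres}.
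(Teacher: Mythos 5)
Your proposal is correct and follows essentially the same route as the paper: the lower bound \eqref{wk+1} via the orthogonal decomposition \eqref{EqIRQIwk} together with \eqref{error1} and \eqref{parlett}, and \eqref{resrelation} via \eqref{minpro}, $u_{k+1}=w_{k+1}/\|w_{k+1}\|$ and the Rayleigh quotient optimality. Your observation that the asymptotic bound $|\cos\phi_k+\xi_k\cos\psi_k|\geq 1-\xi_k$ needs only $\cos\phi_k=1-O(\sin^2\phi_k)$ and $|\cos\psi_k|\leq 1$, and not Theorem~\ref{minres} or assumption \eqref{assump1}, is consistent with the paper's own remark that this inequality holds independently of $\varphi_k$, even though the paper's proof formally invokes \eqref{denominres}.
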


\begin{proof}

By using (\ref{EqIRQIwk}), (\ref{error1}), (\ref{parlett}) and
(\ref{denominres}) in turn, we obtain
\begin{eqnarray*}
\|w_{k+1}\|&\geq&\frac{|\cos\phi_k+\xi_k\cos\psi_k|}{|\theta_k-\lambda|}\\
&\geq&\frac{|\cos\phi_k+\xi_k\cos\psi_k|}{(\lambda_{\max}-\lambda_{\min})\sin^2\phi_k}\\
&\geq&\frac{|\cos\phi_k+\xi_k\cos\psi_k||\lambda_2-\lambda|}{4\beta\|r_k\|^2}\\
&\geq&\frac{(1-\xi_k)|\lambda_2-\lambda|+O(\sin^2\phi_k)}{4\beta\|r_k\|^2}\\
&=&\frac{(1-\xi_k)|\lambda_2-\lambda|}{4\beta\|r_k\|^2}
\end{eqnarray*}
with the last equality holding asymptotically by ignoring $O(1)$. This
proves (\ref{wk+1}).

It follows from  (\ref{minpro}) and $u_{k+1}=w_{k+1}/\|w_{k+1}\|$
that
$$
\|(A-\theta_k I)u_{k+1}\|=\frac{\sqrt{1-\xi_k^2}}{\|w_{k+1}\|}.
\label{simoncini1}
$$
So from the optimality of Rayleigh quotient we obtain
\begin{equation}
\|r_{k+1}\|=\|(A-\theta_{k+1}I)u_{k+1}\|\leq\|(A-\theta_kI)u_{k+1}\|
=\frac{\sqrt{1-\xi_k^2}}{\|w_{k+1}\|}.\label{simoncini2}
\end{equation}
Substituting (\ref{wk+1}) into it establishes (\ref{resrelation}).
\end{proof}

Simoncini and Eld$\acute{e}$n \cite{SimonciniRQI} present an
important estimate on $\|w_{k+1}\|$:
\begin{equation}
\|w_{k+1}\|\geq\frac{|1-\varepsilon_m|\cos^3\phi_k}{\sin\phi_k}
\frac{1}{\|r_k\|},\label{simonwk+1}
\end{equation}
where $\varepsilon_m=p_m(\lambda-\theta_k)$ with $p_m$ the residual
polynomial of MINRES satisfying $p_m(0)=1$; see Proposition 5.3 there.
This relation involves $\varepsilon_m$ and is less easily interpreted
than (\ref{wk+1}). When $\varepsilon_m$ is not near one,
$\|w_{k+1}\|$ is bounded by $O(\frac{1}{\|r_k\|^2})$ from below.
Based on this estimate, Simoncini and Eld$\acute{e}$n have designed
a stopping criterion for inner iterations.

From (\ref{wk+1}) and Theorems~\ref{minrescubic}--\ref{minres2}, it
is instructive to observe the remarkable facts: $\|w_{k+1}\|$
increases as rapidly as $O(\frac{1}{\|r_k\|^2})$ and
$O(\frac{1}{\|r_k\|})$, respectively, if the inexact RQI with MINRES
converges cubically and quadratically; but it is $O(1)$ if the
method converges linearly. As (\ref{wk+1}) is sharp, the size of
$\|w_{k+1}\|$ can reveal both cubic and quadratic convergence.
We can control $\|w_{k+1}\|$ to make the method converge
cubically or quadratically, as done in
\cite{SimonciniRQI}. However, it is unlikely to do so for linear
convergence as the method may converge linearly or disconverges when
$\|w_{k+1}\|$ remains $O(1)$.

Another main result of Simoncini and
Eld$\acute{e}$n~\cite{SimonciniRQI} is Proposition 5.3 there:
\begin{equation}
\|r_{k+1}\|\leq\frac{\sin\phi_k}{\cos^3\phi_k}\frac{\sqrt{1-\xi_k^2}}
{|1-\varepsilon_m|}\|r_k\|. \label{simoncini3}
\end{equation}
Note $\sin\phi_k=O(\|r_k\|)$. The above result means quadratic
convergence if $\frac{\sqrt{1-\xi_k^2}} {|1-\varepsilon_m|}$ is
moderate, which is the case if $\xi_k$ and $\varepsilon_m$ are not
near one. We refer to \cite{paige,xueelman} for discussions on
$\varepsilon_m$. Since how $\xi_k$ and $\varepsilon_m$ affect each
other is complicated, (\ref{resrelation}) is simpler and more easily
understandable than (\ref{simoncini3}). However, both
(\ref{resrelation}) and (\ref{simoncini3}) are weaker than
Theorems~\ref{minrescubic}--\ref{minres2} since quadratic
convergence requires $\xi_k<1$ not near one and the cubic
convergence of RQI and of the inexact RQI cannot be recovered when
$\xi_k=0$ and $\xi_k=O(\|r_k\|)$, respectively.

\section{The inexact RQI with a tuned preconditioned
MINRES}\label{precondit}

We have found that even for $\xi_k$ near one we may still need quite
many inner iteration steps at each outer iteration. This is especially
the case for difficult problems, i.e., big $\beta$'s, or for computing
an interior eigenvalue $\lambda$ since it
leads to a highly Hermitian indefinite matrix $(A-\theta_k I)$
at each outer iteration. So, in order to improve the
overall performance, preconditioning may be necessary to speed up
MINRES. Some preconditioning techniques have been proposed in, e.g.,
\cite{mgs06,SimonciniRQI}. In the unpreconditioned case, the
right-hand side $u_k$ of (\ref{EqERQILinearEquation1}) is rich in
the direction of the desired $x$. MINRES can benefit much from this
property when solving the linear system. Actually,
if the right-hand side is an eigenvector of the coefficient
matrix, Krylov subspace type methods will find the exact solution in one step.
However, a usual preconditioner loses this important
property, so that inner iteration steps may not be reduced
\cite{mgs06,freitagspence,freitag08b}. A preconditioner with tuning
can recover this property and meanwhile attempts to improve the
conditioning of the preconditioned linear system, so that considerable
improvement over a usual preconditioner is possible
\cite{freitagspence,freitag08b,xueelman}. In this section we show how to extend
our theory to the inexact RQI with a tuned preconditioned MINRES.

Let $Q=LL^*$ be a Cholesky factorization of some Hermitian positive
definite matrix which is an approximation to $A-\theta_k I$ in some
sense \cite{mgs06,freitag08b,xueelman}. A tuned preconditioner
${\cal Q}={\cal L}{\cal L}^*$ can be constructed by adding a rank-1
or rank-2 modification to $Q$, so that
\begin{equation}
{\cal Q}u_k=Au_k; \label{tune}
\end{equation}
see \cite{freitagspence,freitag08b,xueelman} for details. Using
the tuned preconditioner ${\cal Q}$, the shifted inner linear system
(\ref{EqERQILinearEquation1}) is equivalently transformed to
the preconditioned one
\begin{equation}
B\hat{w}={\cal L}^{-1}(A-\theta_k I){\cal L}^{-*}\hat{w}={\cal
L}^{-1}u_k \label{precond}
\end{equation}
with the original $w={\cal L}^{-*}\hat{w}$. Once MINRES is used to
solve it, we are led to the inexact RQI with a tuned preconditioned
MINRES. A power of the tuned preconditioner ${\cal Q}$  is that the
right-hand side ${\cal L}^{-1}u_k$ is rich in the eigenvector of $B$
associated with its smallest eigenvalue and has the same quality as
$u_k$ as an approximation to the eigenvector $x$ of $A$, while for the
usual preconditioner $Q$ the right-hand side $L^{-1}u_k$ does not
possess this property.

Take the zero vector as an initial guess to the solution of
(\ref{precond}) and let $\hat{w}_{k+1}$ be the approximate solution
obtained by the $m$-step MINRES applied to it. Then we have
\begin{equation}
{\cal L}^{-1}(A-\theta_k I){\cal L}^{-*}\hat{w}_{k+1}={\cal
L}^{-1}u_k+\hat{\xi}_k\hat{d}_k, \label{preresidual}
\end{equation}
where $\hat{w}_{k+1}\in {\cal K}_m(B,{\cal L}^{-1}u_k)$,
$\hat{\xi}_k\hat{d}_k$ with $\|\hat{d}_k\|=1$ is the residual and
$\hat{d}_k$ is the residual direction vector. Trivially, for any
$m$, we have $\hat{\xi}_k\leq\|{\cal L}^{-1}u_k\|$. Keep in mind
that $w_{k+1}={\cal L}^{-*}\hat{w}_{k+1}$. We then get
\begin{equation}
(A-\theta_k I)w_{k+1}=u_k+\hat{\xi}_k{\cal
L}\hat{d}_k=u_k+\hat{\xi}_k\|{\cal L}\hat{d}_k\|\frac{{\cal
L}\hat{d}_k}{\|{\cal L}\hat{d}_k\|}.\label{orig}
\end{equation}
So $\xi_k$ and $d_k$ in (\ref{EqIRQILinearEquation1}) are
$\hat{\xi}_k\|{\cal L}\hat{d}_k\|$ and $\frac{{\cal
L}\hat{d}_k}{\|{\cal L}\hat{d}_k\|}$, respectively, and our general
Theorems~\ref{ThmIRQIQuadraticConvergence}--\ref{resbound} apply and
is not repeated. In practice, we require that the tuned
preconditioned MINRES solves the inner linear system with
$\hat{\xi}_k<1/\|{\cal L}\hat{d}_k\|$ such that $\xi_k<1$.

How to extend Theorem~\ref{minres} to the preconditioned case is
nontrivial and needs some work. Let $(\mu_i,y_i),\ i=1,2,\ldots,n$
be the eigenpairs of $B$ with
$$
\mid\mu_1\mid<\mid\mu_2\mid\leq\cdots\leq\mid\mu_n\mid.
$$
Define $\hat{u}_k={\cal L}^{-1}u_k/\|{\cal L}^{-1}u_k\|$. Similar to
{\rm (\ref{EqIRQIDecompositoinOfu_k})} and {\rm
(\ref{EqIRQIDecompositoinOfd_k}), let
\begin{eqnarray}
\hat{u}_k&=&y_1\cos\hat{\phi}_k+\hat{e}_k\sin\hat{\phi}_k,\ \hat{e}_k\perp y_1,
\ \|\hat{e}_k\|=1, \label{predec1}\\
\hat{d}_k&=&y_1\cos\hat{\psi}_k+\hat{f}_k\sin\hat{\psi}_k, \
\hat{f}_k\perp y_1, \ \|\hat{f}_k\|=1 \label{predec2}
\end{eqnarray}
be the orthogonal direct sum decompositions of $\hat{u}_k$ and
$\hat{d}_k$. Then it is known
\cite{freitag08b} that
\begin{eqnarray}
|\mu_1|&=&O(\sin\phi_k),\label{mu1}\\
\sin\hat{\phi}_k&\leq& c_1\sin\phi_k \label{phihat}
\end{eqnarray}
with $c_1$ a constant.

Based on the proof line of Theorem~\ref{minres} and combining
(\ref{phihat}), the following results can be proved directly.

\begin{lemma}\label{minres4}
For the tuned preconditioned MINRES, let the unit length vectors $\hat{e}_k$ and
$\hat{f}_k$ be as in {\rm (\ref{predec1})} and {\rm
(\ref{predec2})}, define the angle
$\hat{\varphi}_k=\angle(\hat{f}_k, B\hat{e}_k)$ and assume that
\begin{equation}
|\cos\hat{\varphi}_k|\geq|\cos\hat{\varphi}|>0 \label{passump1}
\end{equation}
holds uniformly for an angle $\hat\varphi$ away from $\frac{\pi}{2}$
independent of $k$. Then we have
\begin{eqnarray}
 \sin\hat{\psi}_k&\leq&\frac{c_1|\mu_n|}{|\mu_2||\cos\hat\varphi|}
 \sin\phi_k,\label{psinpsi}\\
 \cos\hat{\psi}_k&=&\pm (1-O(\sin^2\phi_k)), \label{pcospsi}\\
\hat{d}_k&=&\pm y_1+O(\sin\phi_k).\label{pdkerror}
\end{eqnarray}
\end{lemma}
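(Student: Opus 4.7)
The approach is to transplant the argument of Theorem~\ref{minres} directly to the preconditioned setting, with the triple $(B,\hat{u}_k,y_1)$ playing the roles of $((A-\theta_k I),u_k,x)$. The decompositions (\ref{predec1})--(\ref{predec2}) mirror (\ref{EqIRQIDecompositoinOfu_k})--(\ref{EqIRQIDecompositoinOfd_k}) exactly, and Hermiticity of $B$ preserves the orthogonality structure used in the unpreconditioned proof. So the structural identity and the bounding chain should carry over, with only the final constant accounting needing adjustment through (\ref{mu1}) and (\ref{phihat}).

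First I invoke MINRES optimality for (\ref{precond}): the residual $\hat{\xi}_k\hat{d}_k$ is orthogonal to $B\mathcal{K}_m(B,\hat{u}_k)$, and since $\hat{u}_k$ itself lies in this Krylov subspace, $\hat{d}_k^{*}B\hat{u}_k=0$. Substituting (\ref{predec1}) and (\ref{predec2}), using $By_1=\mu_1 y_1$, and noting $\hat{e}_k\perp y_1$ and $\hat{f}_k\perp y_1$ (the former implying $B\hat{e}_k\perp y_1$ by Hermiticity of $B$), the two cross terms that mix $y_1$-components with their orthogonal complement both vanish. What survives is
\[
\mu_1\cos\hat{\phi}_k\cos\hat{\psi}_k+(\hat{f}_k^{*}B\hat{e}_k)\sin\hat{\phi}_k\sin\hat{\psi}_k=0,
\]
so
\[
|\tan\hat{\psi}_k|=\frac{|\mu_1|\cos\hat{\phi}_k}{|\hat{f}_k^{*}B\hat{e}_k|\sin\hat{\phi}_k}.
\]

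Next I bound this expression. For the denominator, expanding $\hat{e}_k\in\operatorname{span}\{y_2,\dots,y_n\}$ gives $\|B\hat{e}_k\|\geq|\mu_2|$, and assumption (\ref{passump1}) then yields $|\hat{f}_k^{*}B\hat{e}_k|=\|B\hat{e}_k\|\,|\cos\hat{\varphi}_k|\geq|\mu_2|\,|\cos\hat{\varphi}|$. For the numerator combined with the lonely $\sin\hat{\phi}_k$ in the denominator, I use the tuning-based estimate (\ref{mu1}) together with (\ref{phihat}), exploiting the fact that $|\mu_1|$ is controlled by $|\mu_n|\sin\hat{\phi}_k$ via a Rayleigh-quotient-type bound and that the conversion between $\sin\hat\phi_k$ and $\sin\phi_k$ through (\ref{phihat}) brings out the $c_1$. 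Collecting the constants delivers (\ref{psinpsi}).

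Once (\ref{psinpsi}) is in hand, writing $\cos\hat{\psi}_k=\pm\sqrt{1-\sin^{2}\hat{\psi}_k}=\pm\bigl(1-\tfrac12\sin^{2}\hat{\psi}_k\bigr)+O(\sin^{4}\hat{\psi}_k)$ and using $\sin\hat\psi_k=O(\sin\phi_k)$ yields (\ref{pcospsi}); substituting (\ref{psinpsi}) and (\ref{pcospsi}) into (\ref{predec2}) gives (\ref{pdkerror}) up to the sign ambiguity already present in the statement. The main obstacle is the third step: reconciling $|\mu_1|=O(\sin\phi_k)$ with the bare $\sin\hat{\phi}_k$ appearing in the denominator so that the ratio really scales as $\sin\phi_k$ with the claimed coefficient $c_1|\mu_n|/(|\mu_2|\,|\cos\hat{\varphi}|)$; this is where the quantitative content of (\ref{mu1}) and (\ref{phihat}) has to be combined carefully. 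The remaining ingredients (Hermiticity of $B$, Krylov-orthogonality of the MINRES residual, the spectral expansion of $\hat{e}_k$, and Taylor expansion of $\cos$) are routine. Because the lemma does not attempt to pin down which sign occurs in $\cos\hat{\psi}_k$, no preconditioned analog of the residual-minimization argument used in Theorem~\ref{minres} (to exclude one of the two signs) is needed here.
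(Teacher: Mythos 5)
Your structural transplant is exactly the route the paper indicates (its own proof is only the one-line remark that the result follows ``the proof line of Theorem~\ref{minres}'' combined with (\ref{phihat})), and your first two steps are sound: $\hat{d}_k\perp B\hat{u}_k$ does give $\mu_1\cos\hat{\phi}_k\cos\hat{\psi}_k+\hat{f}_k^{*}B\hat{e}_k\sin\hat{\phi}_k\sin\hat{\psi}_k=0$, and $|\hat{f}_k^{*}B\hat{e}_k|\ge|\mu_2||\cos\hat{\varphi}|$ is correct. But the step you yourself flag as ``the main obstacle'' is precisely the one carrying all the content of (\ref{psinpsi}), and you leave it open. In the unpreconditioned case the bound closes because $\theta_k$ is the Rayleigh quotient of $u_k$, so $u_k^{*}(A-\theta_k I)u_k=0$ forces the quadratic estimate (\ref{error1}), $|\lambda-\theta_k|\le(\lambda_{\max}-\lambda_{\min})\sin^2\phi_k$, which cancels the lone $\sin\phi_k$ in the denominator of (\ref{tanpsi}). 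The preconditioned analogue requires $|\mu_1|\cos\hat{\phi}_k/\sin\hat{\phi}_k\le c_1|\mu_n|\sin\phi_k$, i.e., a bound of the type $|\mu_1|\lesssim|\mu_n|\tan^{2}\hat{\phi}_k$ (two powers of the angle). The ingredients you cite cannot produce this: (\ref{mu1}) gives only $|\mu_1|=O(\sin\phi_k)$, and (\ref{phihat}) bounds $\sin\hat{\phi}_k$ from \emph{above} while it sits in the \emph{denominator}, so with these alone the ratio is $O(\sin\phi_k)/\sin\hat{\phi}_k$, which need not be $O(\sin\phi_k)$; and your proposed ``Rayleigh-quotient-type bound'' $|\mu_1|\lesssim|\mu_n|\sin\hat{\phi}_k$ (a single power) only yields $|\tan\hat{\psi}_k|=O(1)$, not (\ref{psinpsi}).

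What is actually needed, and what your write-up never supplies, is the preconditioned counterpart of $u_k^{*}(A-\theta_k I)u_k=0$: one must show $\hat{u}_k^{*}B\hat{u}_k=O(\sin^2\phi_k)$, which fails for a general preconditioner and has to be extracted from the tuning condition ${\cal Q}u_k=Au_k$ (e.g., ${\cal Q}^{-1}u_k=\theta_k^{-1}(u_k-{\cal Q}^{-1}r_k)$, so $({\cal Q}^{-1}u_k)^{*}(A-\theta_k I)({\cal Q}^{-1}u_k)=O(\|r_k\|^2)$ because $u_k^{*}(A-\theta_k I)=r_k^{*}$). Writing $\hat{u}_k^{*}B\hat{u}_k=\mu_1\cos^2\hat{\phi}_k+\hat{e}_k^{*}B\hat{e}_k\sin^2\hat{\phi}_k$ then gives $|\mu_1|\cos^2\hat{\phi}_k\le|\mu_n|\sin^2\hat{\phi}_k+O(\sin^2\phi_k)$, and only now do (\ref{phihat}) and $\cos\hat{\phi}_k\to 1$ deliver $|\mu_1|\cos\hat{\phi}_k/\sin\hat{\phi}_k\le c_1|\mu_n|\sin\phi_k$ asymptotically (one must also keep $\sin\hat{\phi}_k$ comparable to $\sin\phi_k$ so that the $O(\sin^2\phi_k)$ remainder divided by $\sin\hat{\phi}_k$ stays of the right order). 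The rest of your argument --- (\ref{pcospsi}) by Taylor expansion, (\ref{pdkerror}) by substitution, and the observation that no sign-determination argument is needed --- is fine; the gap is solely the missing quadratic control of $\mu_1$ relative to $\hat{\phi}_k$, which is the heart of the lemma.
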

We can make a qualitative analysis, similar to that done for
Theorem~\ref{minres} in the unpreconditioned case, and justify the
assumption on $\hat{\varphi}_k$. With this theorem, we now estimate
$\sin\psi_k$ and $\cos\psi_k$.

\begin{theorem}\label{pminres}
For the tuned preconditioned MINRES, under the assumptions of
Theorem~\ref{minres2}, it holds that
\begin{eqnarray}
\sin\psi_k&=&O(\sin\phi_k),\\
\cos\psi_k&=&\pm (1-O(\sin^2\phi_k))\\
d_k&=&=\pm x+O(\sin\phi_k).
\end{eqnarray}
\end{theorem}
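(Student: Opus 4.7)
The plan is to bootstrap from Lemma~\ref{minres4}, which already controls the residual direction $\hat{d}_k$ in the preconditioned coordinate system, and then pass back to the original coordinates through the factor $\mathcal{L}$ using the relation $d_k=\mathcal{L}\hat{d}_k/\|\mathcal{L}\hat{d}_k\|$ derived in \eqref{orig}. The key observation that makes the transfer clean is the tuning identity $\mathcal{Q}u_k=Au_k$, i.e., $\mathcal{L}\mathcal{L}^*u_k=Au_k$, which forces $\hat{u}_k=\mathcal{L}^{-1}u_k/\|\mathcal{L}^{-1}u_k\|$ and $y_1$ to differ only by $O(\sin\phi_k)$ after one applies \eqref{phihat}.

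First I would use Lemma~\ref{minres4} to write $\hat{d}_k=\pm y_1+O(\sin\phi_k)$ and \eqref{predec1} together with \eqref{phihat} to write $\hat{u}_k=y_1+O(\sin\phi_k)$. Multiplying both by $\mathcal{L}$ (which has bounded norm and is independent of $k$ once $\theta_k$ stays in a neighborhood of $\lambda$) yields
\begin{equation*}
\mathcal{L}\hat{d}_k=\pm\mathcal{L}y_1+O(\sin\phi_k),\qquad
\mathcal{L}\hat{u}_k=\mathcal{L}y_1+O(\sin\phi_k).
\end{equation*}
But $\mathcal{L}\hat{u}_k=u_k/\|\mathcal{L}^{-1}u_k\|$ by the definition of $\hat{u}_k$, and $u_k=x\cos\phi_k+e_k\sin\phi_k=x+O(\sin\phi_k)$ by \eqref{EqIRQIDecompositoinOfu_k}. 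Substituting gives $\mathcal{L}y_1=x/\|\mathcal{L}^{-1}u_k\|+O(\sin\phi_k)$, so $\mathcal{L}y_1$ is collinear with $x$ up to an $O(\sin\phi_k)$ perturbation and its normalization satisfies $\mathcal{L}y_1/\|\mathcal{L}y_1\|=\pm x+O(\sin\phi_k)$.

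Combining the two displays and normalizing $\mathcal{L}\hat{d}_k$ then gives $d_k=\pm x+O(\sin\phi_k)$, which is the third claim. The remaining two assertions follow immediately by inserting this expansion into the orthogonal decomposition \eqref{EqIRQIDecompositoinOfd_k}: since $d_k\mp x=O(\sin\phi_k)$ and $x\perp f_k$, projecting onto $x$ gives $\cos\psi_k=\pm(1-O(\sin^2\phi_k))$, and projecting onto $x^{\perp}$ gives $\sin\psi_k=O(\sin\phi_k)$, exactly mirroring \eqref{cospsi}--\eqref{sinpsi} but now in the original coordinate frame.

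The main obstacle I anticipate is making the step $\mathcal{L}y_1\propto x+O(\sin\phi_k)$ genuinely rigorous: although the tuning identity is what pins $y_1$ down, the constants hidden in the $O(\sin\phi_k)$ terms depend on $\|\mathcal{L}\|$, $\|\mathcal{L}^{-1}\|$, and on the separation of $\mu_1$ from the remaining spectrum of $B$, which must be uniformly controlled in $k$. Once one grants the standard assumptions under which $\mathcal{L}$ stays uniformly bounded (as made implicit in \cite{freitag08b} and used in \eqref{mu1}--\eqref{phihat}), the argument reduces to the four-line manipulation above; without such uniformity the sign ambiguity $\pm$ and the $O(\sin\phi_k)$ bound could not be made independent of the outer iteration index.
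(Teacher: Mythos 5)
Your argument is correct and reaches the same conclusions, but by a noticeably shorter route than the paper's. The paper does \emph{not} invoke \eqref{phihat} directly inside the proof; instead it starts from the eigenvector relation ${\cal L}^{-1}(A-\theta_k I){\cal L}^{-*}y_1=\mu_1y_1$, introduces the auxiliary vector $\tilde u_k={\cal L}^{-*}y_1/\|{\cal L}^{-*}y_1\|$, uses $|\mu_1|=O(\sin\phi_k)$ (i.e., \eqref{mu1}) together with standard perturbation theory to conclude $\sin\angle(\tilde u_k,x)=O(\sin\phi_k)$ and hence $\tilde u_k=u_k+O(\sin\phi_k)$, and only then brings in the tuning identity $Au_k={\cal Q}u_k$ to deduce $\sin\angle({\cal L}y_1,x)=O(\sin\phi_k)$. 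From there it builds separate orthogonal direct-sum decompositions of ${\cal L}y_1$ and ${\cal L}\hat f_k$, estimates $\|{\cal L}\hat d_k\|$, and unpacks $d_k$ componentwise. Your route bypasses $\tilde u_k$, $\mu_1$, and the explicit use of ${\cal Q}u_k=Au_k$ entirely: you instead use the already-available \eqref{phihat} to get $\hat u_k=y_1+O(\sin\phi_k)$, multiply through by ${\cal L}$, and exploit the identity ${\cal L}\hat u_k=u_k/\|{\cal L}^{-1}u_k\|$, which gives ${\cal L}y_1\propto x+O(\sin\phi_k)$ in two lines. Since \eqref{phihat} itself encodes the effect of the tuning (it is proved in \cite{freitag08b} using exactly ${\cal Q}u_k=Au_k$), the underlying mechanism is the same, but your presentation exposes the essential step more transparently and avoids re-deriving information that \eqref{mu1}--\eqref{phihat} already package. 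The one point you flag yourself -- that all the hidden constants depend on $\|{\cal L}\|$ and $\|{\cal L}^{-1}\|$ being uniformly bounded in $k$ -- is indeed needed both for your $\mathcal{L}$-multiplication step and for the final renormalization of ${\cal L}\hat d_k$, but this is the same implicit uniformity the paper relies on, so it is a shared assumption rather than a gap specific to your argument.
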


\begin{proof}
By definition, we have
$$
{\cal L}^{-1}(A-\theta_k I){\cal L}^{-*}y_1=\mu_1y_1,
$$
from which it follows that
$$
(A-\theta_kI)\tilde{u}_k=\mu_1\frac{{\cal L}y_1}{\|{\cal L}^{-*}y_1\|}
$$
with $\tilde{u}_k={\cal L}^{-*}y_1/\|{\cal L}^{-*}y_1\|$. Therefore,
by standard perturbation theory and (\ref{mu1}), we get
$$
\sin\angle(\tilde{u}_k,x)=O(\mu_1\frac{\|{\cal L}y_1\|}
{\|{\cal L}^{-*}y_1\|})=O(|\mu_1|)=O(\sin\phi_k).
$$
Since
$$
\angle(\tilde{u}_k,u_k)\leq\angle(\tilde{u}_k,x)+\angle(u_k,x),
$$
we get
$$
\sin\angle(\tilde{u}_k,u_k)=O(\sin\phi_k),
$$
i.e.,
$$
\tilde{u}_k=u_k+O(\sin\phi_k).
$$
As a result, from $Au_k={\cal Q}u_k$  and $\tilde{u}_k={\cal
L}^{-*}y_1/\|{\cal L}^{-*}y_1\|$ we have
\begin{eqnarray*}
{\cal L}y_1&=&\|{\cal L}^{-*}y_1\|{\cal L}{\cal L}^{*}\tilde{u}_k
=\|{\cal L}^{-*}y_1\|{\cal Q}\tilde{u}_k\\
&=&\|{\cal L}^{-*}y_1\|{\cal Q}
(u_k+O(\sin\phi_k))\\
&=&\|{\cal L}^{-*}y_1\|Au_k+O(\sin\phi_k)\\
&=&\|{\cal L}^{-*}y_1\|(\theta_ku_k+r_k)+O(\sin\phi_k)\\
&=&\theta_k\|{\cal L}^{-*}y_1\|u_k+\|{\cal L}^{-*}y_1\|r_k+O(\sin\phi_k),
\end{eqnarray*}
from which we obtain
$$
\sin\angle({\cal L}y_1,u_k)=O(\|r_k\|)+O(\sin\phi_k)=O(\sin\phi_k).
$$
Therefore, we have
\begin{equation}
\sin\angle({\cal L}y_1,x)\leq\sin\angle({\cal L}y_1,u_k)+
\sin\angle(u_k,x)=O(\sin\phi_k),\label{ly1}
\end{equation}
that is, the (unnormalized) ${\cal L}y_1$ has the same quality as
$u_k$ as an approximation to $x$. We have the orthogonal direct sum
decomposition
\begin{equation}
{\cal L}y_1=\|{\cal L}y_1\|\left(x\cos\angle({\cal
L}y_1,x)+g_k\sin\angle({\cal L}y_1,x)\right) \label{decly1}
\end{equation}
with $\|g_k\|=1$ and $g_k\perp x$. Also, make the orthogonal direct
orthogonal sum decomposition
\begin{equation}
{\cal L}\hat{f}_k=\|{\cal L}\hat{f}_k\|\left(x\cos\angle({\cal
L}\hat{f}_k,x)+h_k\sin\angle({\cal
L}\hat{f}_k,x)\right)\label{declfk}
\end{equation}
with $\|h_k\|=1$ and $h_k\perp x$. Making use of (\ref{psinpsi}) and
(\ref{pcospsi}), we get from (\ref{predec2}) that
\begin{eqnarray*}
\|{\cal L}_k\hat{d}_k\|^2&=&\|{\cal L}y_1\|^2\cos^2\hat{\psi}_k
+\|{\cal L}\hat{f}_k\|^2\sin^2\hat{\psi}_k
+2Re({\cal L}y_1,{\cal L}\hat{f}_k)\sin\hat{\psi}_k\cos\hat{\psi}_k\\
&=&\|{\cal L}y_1\|^2+O(\sin^2\phi_k)+O(\sin\phi_k)\\
&=&\|{\cal L}y_1\|^2+O(\sin\phi_k).
\end{eqnarray*}
Hence we have
\begin{equation}
\|{\cal L}y_1\|=\|{\cal L}\hat{d}_k\|+O(\sin\phi_k).\label{length}
\end{equation}

Now, from (\ref{predec2}), we obtain
$$
d_k=\frac{{\cal L}\hat{d}_k}{\|{\cal L}\hat{d}_k\|}
=\frac{1}{\|{\cal L}\hat{d}_k\|}({\cal L}y_1\cos\hat{\psi}_k +{\cal
L}\hat{f}_k\sin\hat{\psi}_k).
$$
Substituting (\ref{decly1}) and
(\ref{declfk}) into the above relation yields
\begin{eqnarray*}
d_k&=&\frac{1}{\|{\cal L}\hat{d}_k\|}((\|{\cal
L}y_1\|\cos\angle({\cal L}y_1,x)\cos\hat{\psi}_k+\|{\cal
L}\hat{f}_k\|\cos\angle({\cal
L}\hat{f}_k,x)\sin\hat{\psi}_k)x\\
&&+(\|{\cal L}y_1\|g_k\sin\angle({\cal
L}y_1,x)\cos\hat{\psi}_k+\|{\cal L}\hat{f}_k\|h_k\sin\angle({\cal
L}\hat{f}_k,x)\sin\hat{\psi}_k)).
\end{eqnarray*}
This is the orthogonal direct sum decomposition of $d_k$.
Making use of $\sin\angle({\cal L}y_1,x)=O(\sin\phi_k)$,
$\sin\hat{\psi}_k=O(\sin\phi_k)$ and (\ref{length}) and comparing with
$d_k=x\cos\psi_k+e_k\sin\psi_k$, we get
$$
\sin\psi_k=O(\sin\phi_k).
$$
Thus, it holds that $\cos\psi_k=\pm (1-O(\sin^2\phi_k))$ and
$d_k=\pm x+O(\sin\phi_k)$.
\end{proof}

Compared with Theorem~\ref{minres}, this theorem indicates that the
directions of residuals
obtained by the unpreconditioned MINRES and the tuned preconditioned
MINRES have the same properties. With the theorem, we can
write $\sin\psi_k\leq c_2\sin\phi_k$ with $c_2$ a constant. Then, based on
Theorems~\ref{ThmIRQIQuadraticConvergence}--\ref{resbound}, it is
direct to extend Theorems~\ref{minrescubic}--\ref{minres2} to the
inexact RQI with the tuned preconditioned MINRES, respectively. We
have made preliminary experiments and confirmed the theory. Since
our main concerns in this paper are the convergence theory and a
pursue of effective tuned preconditioners are beyond the scope of
the current paper, we will only report numerical experiments on the
inexact RQI with the unpreconditioned MINRES in the next section.

\section{Numerical experiments}\label{testminres}

Throughout the paper, we perform numerical experiments on an Intel
(R) Core (TM)2 Quad CPU Q9400 $2.66$GHz with main memory 2 GB using
Matlab 7.8.0 with the machine precision $\epsilon_{\rm
mach}=2.22\times 10^{-16}$ under the Microsoft Windows XP operating
system.

We report numerical experiments on four symmetric (Hermitian)
matrices: BCSPWR08 of order 1624, CAN1054 of order 1054, DWT2680 of
order 3025 and LSHP3466 of order 3466 \cite{duff}. Note that the
bigger
$\beta=\frac{\lambda_{\max}-\lambda_{\min}}{|\lambda_2-\lambda|}$
is, the worse conditioned $x$ is. For a bigger $\beta$,
Theorem~\ref{ThmIRQIQuadraticConvergence} and
Theorems~\ref{minrescubic}--\ref{minres2} show that RQI and the
inexact RQI with MINRES may converge more slowly and use more outer
iterations though they can still converge cubically.
If an interior eigenpair is required, the shifted linear systems can
be highly indefinite (i.e., many positive and negative eigenvalues)
and may be hard to solve. As a reference, we use the Matlab function
{\sf eig.m} to compute $\beta$. To better illustrate the theory, we
compute both exterior and interior eigenpairs. We compute the
smallest eigenpair of BCSPWR08, the tenth smallest eigenpair of
CAN1054, the largest eigenpair of DWT2680 and the twentieth smallest
eigenpair of LSHP3466, respectively.

Remembering that the (asymptotic) cubic convergence of the inexact RQI for
$\xi_k=O(\|r_k\|)$ is independent of iterative solvers,
in the experiments we take
\begin{equation}
\xi_k=\min\{0.1,\frac{\|r_k\|}{\|A\|_1}\}.\label{decrease}
\end{equation}

We first test the inexact RQI with MINRES for $\xi_k\leq\xi<1$ with
a few constants $\xi$ not near one and illustrate its cubic
convergence. We construct the same initial $u_0$ for each matrix
that is $x$ plus a reasonably small perturbation generated randomly
in a uniform distribution, such that
$|\lambda-\theta_0|<\frac{|\lambda-\lambda_2|}{2}$. The algorithm
stops whenever $\|r_k\|=\|(A-\theta_k I)u_k\|\leq\|A\|_1tol$, and we
take $tol=10^{-14}$ unless stated otherwise. In experiments, we use
the Matlab function {\sf minres.m} to solve the inner linear
systems. Tables~\ref{BCSPWR08minres}--\ref{lshpminres} list the
computed results, where $iters$ denotes the number of total inner
iteration steps and $iter^{(k-1)}$ is the number of inner iteration
steps when computing $(\theta_k,u_k)$, the "-" indicates that MINRES
stagnates and stops at the $iter^{(k-1)}$-th step, and $res^{(k-1)}$
is the actual relative residual norm of the inner linear system when
computing $(\theta_k,u_k)$. Clearly, $iters$ is a reasonable measure
of the overall efficiency of the inexact RQI with MINRES. We comment
that in {\sf minres.m} the output $iter^{(k-1)}=m-1$, where $m$ is
the steps of the Lanczos process.

\begin{table}[ht]
\begin{center}
\begin{tabular}{|c|c|c|c|c|c|c|}\hline
$\xi_{k-1}\leq\xi$&$k$&$\|r_k\|$&$\sin\phi_k$&$res^{(k-1)}$
&$iter^{(k-1)}$&$iters$\\\hline
0 (RQI)&1& 0.0092&0.0025& & &  \\
&  2&$4.4e-8$&$5.0e-8$& & & \\
& 3&$1.0e-15$&$2.2e-15$ & & &\\\hline
$\frac{\|r_{k-1}\|}{\|A\|_1}$&1 &0.0096&0.0036&0.0423&6&126\\
&2&$8.4e-8$&$1.3e-7$&$5.5e-4$&37&\\
&3&$9.4e-15$&$3.2e-15$&-&83&\\\hline
0.1&1&0.0105&0.0049&0.0707&5&68\\
&2&$2.9e-6$&$2.4e-6$&0.0784&21&\\
&3&$1.3e-13$&$2.7e-13$&0.0863&42&\\\hline
0.5&1&0.0218&0.0111&0.2503&3&88\\
&2&$8.7e-5$&$1.9e-4$&0.4190&11&\\
&3&$6.3e-9$&$2.1e-8$&0.4280&31&\\
&4&$1.1e-14$&$3.3e-15$&-&43&\\\hline
$1-\frac{c_1\|r_{k-1}\|}{\|A\|_1}$
&1&0.1409&0.0363&0.8824&1&75\\
$tol=10^{-13}$&2&0.0068&0.2274&0.9227&3&\\
&3&$1.3e-4$&$5.4e-4$&0.9284&11&\\
&4&$3.5e-7$&$1.1e-6$&0.9845&19&\\
&5&$3.4e-11$&$1.6e-11$&$1-3.7\times 10^{-5}$&30&\\
&6&$1.3e-13$&$4.6e-13$&$1-2.2\times 10^{-8}$&11&\\
\hline
\end{tabular}
\begin{tabular}{|c|c|c|}\hline
$\xi_{k-1}$& $k \ (iter^{(k-1)})$& $iters$\\\hline
$1-\left(\frac{c_2\|r_{k-1}\|}{\|A\|_1}\right)^2$& &\\
$tol=10^{-10}$& 1 (1); 2 (3); 3 (9); 4 (13); 5 (15); 6 (16) &62
\\\hline
\end{tabular}
\caption{BCSPWR08, $\beta=40.19,\ \sin\phi_0=0.1134$,
$c_1=c_2=1000$. $k \ (iter^{(k-1)})$ denotes the number of inner
iteration steps used by MINRES when computing
$(\theta_k,u_k)$.}\label{BCSPWR08minres}
\end{center}
\end{table}

\begin{table}[ht]
\begin{center}
\begin{tabular}{|c|c|c|c|c|c|c|}\hline
$\xi_{k-1}\leq\xi$&$k$&$\|r_k\|$&$\sin\phi_k$&$res^{(k-1)}$&
$iter^{(k-1)}$&$iters$\\\hline
0 (RQI)&1&0.0139&0.038& & &  \\
&  2&$6.1e-6$&$5.0e-5$& & & \\
& 3&$1.4e-14$&$3.6e-13$& & &\\\hline
$\frac{\|r_{k-1}\|}{\|A\|_1}$&1 &0.0196&0.0110&0.0387&12&606\\
&2&$6.7e-7$&$1.5e-5$&$3.5e-4$&184&\\
&3&$8.7e-14$&$4.7e-13$&-&410&\\\hline
0.1&1&0.0231&0.0155&0.0943&6&394\\
&2&$2.8e-7$&$8.4e-7$&0.0816&178&\\
&3&$7.6e-14$&$4.7e-13$&-&210&\\\hline
0.5&1&0.0646&0.0253&0.3715&3&408\\
&2&$6.0e-4$&$0.0071$&0.4757&37&\\
&3&$1.2e-7$&$1.6e-7$&0.4636&165&\\
&4&$4.5e-14$&$4.8e-13$&-&203&\\\hline
$1-\frac{c_1\|r_{k-1}\|}{\|A\|_1}$
&1&0.2331&0.0541&0.8325&1&536\\
$tol=10^{-12}$&2&0.0202&0.0161&0.90551&4&\\
&3&$2.5e-4$&0.0044&0.9469&64&\\
&4&$2.1e-6$&$3.2e-5$&0.9913&149&\\
&5&$3.1e-9$&$1.8e-8$&0.9999&155&\\
&6&$2.5e-12$&$2.8e-11$&$1-1.6\times 10^{-7}$&163&\\\hline
\end{tabular}
\begin{tabular}{|c|c|c|}\hline
$\xi_{k-1}$& $k \ (iter^{(k-1)})$& $iters$\\\hline
$1-\left(\frac{c_2\|r_{k-1}\|}{\|A\|_1}\right)^2$& &\\
$tol=10^{-10}$& 1 (1); 2 (3); 3 (16); 4 (172); 5 (143) &335
\\\hline
\end{tabular}
\caption{CAN1054, $\beta=88.28,\ \sin\phi_0=0.1137$, $c_1=c_2=1000$.
$k \ (iter^{(k-1)})$ denotes the number of inner iteration steps
used by MINRES when computing
$(\theta_k,u_k)$.}\label{can1054minres}
\end{center}
\end{table}

\begin{table}[ht]
\begin{center}
\begin{tabular}{|c|c|c|c|c|c|c|}\hline
$\xi_{k-1}\leq\xi$&$k$&$\|r_k\|$&$\sin\phi_k$&$res^{(k-1)}$
&$iter^{(k-1)}$ &$iters$\\\hline
0 (RQI)&1&0.0084&0.1493& & &  \\
&  2&$1.6e-4$&0.0047& & & \\
& 3&$3.2e-9$&$1.1e-7$ & & &\\
&4&$2.0e-15$&$1.1e-13$ & & &\\\hline
$\frac{\|r_{k-1}\|}{\|A\|_1}$&1 &0.0048&0.0409&0.0755&13&595\\
&2&$3.1e-6$&$1.1e-4$&$6.0e-4$&115&\\
&3&$7.6e-14$&$1.3e-12$&-&192&\\
&4&$1.1e-14$&$2.1e-13$&-&275&\\\hline
0.1&1&0.0054&0.0492&0.0966&10&313\\
&2&$1.3e-5$&$2.9e-4$&0.0959&50&\\
&3&$2.7e-10$&$6.8e-9$&0.0967&114&\\
&4&$7.0ee-13$&$2.2e-13$&-&139&\\ \hline
0.5&1&0.0228&0.0705&0.4717&3&297\\
&2&$3.9e-4$&$0.0136$&0.4918&28&\\
&3&$2.6e-6$&$1.3e-4$&0.4681&42&\\
&4&$1.2e-10$&$2.9e-9$&0.4579&109&\\
&5&$6.2e-14$&$3.3e-13$&-&115&\\\hline
$1-\frac{c_1\|r_{k-1}\|}{\|A\|_1}$
&1&0.1031&0.0800&0.9309&1&244\\
&2&0.0081&0.0617&0.9369&5&\\
&3&$8.0e-4$&$0.0025$&0.9470&17&\\
&4&$3.7e-6$&$5.1e-4$&0.9878&31&\\
&5&$3.6e-8$&$7.5e-7$&$1-5.6\times 10^{-4}$&77&\\
&6&$1.4e-12$&$4.5e-11$&$1-5.9\times 10^{-5}$&113&\\\hline
\end{tabular}
\begin{tabular}{|c|c|c|}\hline
$\xi_{k-1}$& $k \ (iter^{(k-1)})$& $iters$\\\hline
$1-\left(\frac{c_2\|r_{k-1}\|}{\|A\|_1}\right)^2$&1 (1); 2 (3); 3
(9);
4 (18);5 (22) &\\
$tol=10^{-9}$&  6 (31); 7 (34); 8 (25); 9 (26); 10 (26) &195
\\\hline
\end{tabular}
\caption{DWT2680, $tol=10^{-12}$, $\beta=2295.6,\
\sin\phi_0=0.1133$, $c_1=10000,\ c_2=1000$. $k \ (iter^{(k-1)})$
denotes the number of inner iteration steps used by MINRES when
computing $(\theta_k,u_k)$.}\label{dwtminres}
\end{center}
\end{table}

\begin{table}[ht]
\begin{center}
\begin{tabular}{|c|c|c|c|c|c|c|}\hline
$\xi_{k-1}\leq\xi$&$k$&$\|r_k\|$&$\sin\phi_k$&$res^{(k-1)}$&
$iter^{(k-1)}$&$iters$\\\hline
0 (RQI)&1&0.0111&0.1096& & &  \\
&  2&$1.0e-4$&0.0016& & & \\
& 3&$2.3e-10$&$9.0e-8$ & & &\\
&4&$2.0e-15$&$6.5e-13$ & & &\\\hline
$\frac{\|r_{k-1}\|}{\|A\|_1}$&1 &0.0100&0.0123&0.0998&5&2692\\
&2&$8.2e-7$&$8.9e-5$&$1.4e-3$&697&\\
&3&$1.3e-13$&$9.1e-13$&-&1110&\\
&4&$4.2e-14$&$6.5e-13$&-&880&\\\hline
0.1&1&0.0100&0.0077&0.09898&5&1902\\
&2&$6.3e-6$&0.0018&0.0996&223&\\
&3&$3.6e-10$&$2.9e-8$&0.0975&790&\\
&4&$1.9e-13$&$6.5e-13$&-&862&\\ \hline
0.5&1&0.0353&0.0270&0.4302&2&1710\\
&2&$3.8e-4$&$0.0064$&0.4838&14&\\
&3&$3.6e-7$&$2.5e-4$&0.4938&119&\\
&4&$3.1e-11$&$1.8e-9$&0.4794&767&\\
&5&$1.5e-13$&$6.5e-13$&-&808&\\\hline
$1-\frac{c_1\|r_{k-1}\|}{\|A\|_1}$
&1&0.0795&0.0369&0.7382&1&1967\\
$tol=10^{-12}$&2&0.0045&0.0117&0.9037&5&\\
&3&$9.2e-5$&$0.0039$&0.9454&35&\\
&4&$3.3e-7$&$0.0002$&0.9863&611&\\
&5&$4.9e-9$&$2.1e-6$&$1-4.9\times 10^{-5}$&627&\\
&6&$5.6e-12$&$1.0e-9$&$1-7.8\times 10^{-7}$&688&\\ \hline
\end{tabular}
\begin{tabular}{|c|c|c|}\hline
$\xi_{k-1}$& $k \ (iter^{(k-1)})$& $iters$\\\hline
$1-\left(\frac{c_2\|r_{k-1}\|}{\|A\|_1}\right)^2$&1 (1); 2 (3); 3
(8);
4 (84);5 (631) &\\
$tol=10^{-10}$&  6 (655); 7 (203); 8 (496) &2081
\\\hline
\end{tabular}
\caption{LSHP3466, $tol=10^{-13}$, $\beta=2613.1,\
\sin\phi_0=0.1011$, $c_1=c_2=1000$. $k \ (iter^{(k-1)})$ denotes the
number of inner iteration steps used by MINRES when computing
$(\theta_k,u_k)$.}\label{lshpminres}
\end{center}
\end{table}

Before explaining our experiments, we should remind that in finite
precision arithmetic $\|r_k\|/\|A\|_1$ cannot decrease further
whenever it reaches a moderate multiple of $\epsilon_{\rm
mach}=2.2\times 10^{-16}$. Therefore, assuming that the algorithm
stops at outer iteration $k$, if $\sin\phi_{k-1}$ or $\|r_{k-1}\|$
is at the level of $10^{-6}$ or $10^{-9}$, then the algorithm may
not continue converging cubically or quadratically at the final
outer iteration $k$.

To judge cubic convergence, we again stress that we should rely on
(\ref{rate1}) and (\ref{rate2}) for RQI and the inexact RQI with
MINRES, respectively. We observe from the tables that the inexact RQI
with MINRES for $\xi_k\leq\xi<1$ with $\xi$ fixed not near one converges
cubically and behaves like RQI and the inexact RQI with MINRES with
decreasing tolerance $\xi_k=(\|r_k\|)$; it uses (almost) the same
outer iterations as the latter two do. The results clearly indicate
that cubic convergence is generally insensitive to $\xi$ provided $\xi$
is not near one. Furthermore, we see that the algorithm with a fixed $\xi$
not near one is much more efficient than the algorithm with
$\xi_k=O(\|r_k\|)$ and is generally about one and a half to twice as
fast as the latter. Since $(A-\theta_k I)w=u_k$ becomes increasingly
ill conditioned as $k$ increases, we need more inner iteration steps
to solve the inner linear system with the same accuracy $\xi$,
though the right-hand side $u_k$ is richer in the direction of $x$
as $k$ increases. For $\xi_k=O(\|r_k\|)$, inner iteration steps
needed can be much more than those for a fixed $\xi$ not near one at
each outer iteration as $k$ increases. We refer to \cite{mgs06} for
a descriptive analysis.

For the above numerical tests, we pay special attention to the ill
conditioned DWT2680 and LSHP3466. Intuitively, RQI and the inexact
RQI with MINRES seems to exhibit quadratic convergence. However, it
indeed converges cubically in the sense of (\ref{rate1}) and
(\ref{rate2}). With $\xi=0.5$, $\sin\phi_k$ and $\|r_k\|$ decrease
more slowly than those obtained with $\xi=0.1$ and the exact RQI as
well as the inexact RQI with decreasing tolerance, and the algorithm
uses one more outer iteration. This is because the convergence
factors in both (\ref{rate1}) and (\ref{rate2}) are big and the
factor with $\xi=0.5$ is considerably bigger than those with the
others. However, the method with $\xi=0.5$ uses comparable $iters$.

Our experiments show that the inexact RQI with MINRES is not
sensitive to $\xi<1$ not near one. So it is advantageous to
implement the inexact RQI with MINRES with a fixed $\xi$ not near
one so as to achieve the cubic convergence. We can benefit much from
such a new implementation and use possibly much fewer $iters$,
compared with the method with $\xi_k=O(\|r_k\|)$.

Next we confirm Theorems~\ref{minrescubic}--\ref{minres2} and verify
quadratic convergence and linear convergence when conditions
(\ref{quadracond}) and (\ref{linrescond}) are satisfied,
respectively. Note that $\beta$ and $|\cos\varphi|$ in the upper
bounds for $\xi_k$ are uncomputable a priori during the process.
However, by their forms we can take
\begin{equation}
\xi_k=1-\frac{c_1\|r_k\|}{\|A\|_1} \label{critera}
\end{equation}
and
\begin{equation}
\xi_k=1-\left(\frac{c_2\|r_k\|}{\|A\|_1}\right)^2 \label{criterb}
\end{equation}
for reasonable $c_1$ and $c_2$, respectively, and use them to test
if the inexact RQI with MINRES converges quadratically and linearly.
It is seen from (\ref{quadracond}) and (\ref{linrescond}) that we
should take $c_1$ and $c_2$ bigger than one as $\beta\geq 1$,
$|\cos\varphi|\leq 1$ and $\zeta<1$. The bigger $\beta$ is, the
bigger $c_1$ and $c_2$ should be. Note that $\xi_k$ defined so may
be negative in the very beginning of outer iterations if $u_0$ is
not good enough. In our implementations, we take
\begin{equation}
\xi_k=\max\{0.95,1-\frac{c_1\|r_k\|}{\|A_1||}\} \label{criteria1}
\end{equation}
and
\begin{equation}
\xi_k=\max\{0.95,1-\left(\frac{c_2\|r_k\|}{\|A_1\|}\right)^2\}
\label{criteria2}
\end{equation}
with $100\leq c_1,c_2\leq 10000$ for quadratic and linear
convergence, respectively. As remarked previously, the inexact RQI
with MINRES for $\xi=0.8$ generally converges cubically though it
may reduce $\|r_k\|$ and $\sin\phi_k$ not as much as that for $\xi$
smaller at each outer iteration. We take it as a reference for cubic
convergence. We implement the method using (\ref{critera}) and
(\ref{criterb}), respectively, after very few outer iterations as
long as the algorithm starts converging. They must approach one as
outer iterations proceed. Again, we test the above four matrices. In
the experiments, we have taken several $c_1,c_2$'s ranging from 100
to 10000. The bigger $c_1$ and $c_2$ are, the safer are bounds
(\ref{criteria1}) and (\ref{criteria2}) for quadratic and linear
convergence, and the faster the algorithm converges. We report the
numerical results for $c_1=c_2=1000$ in
Tables~\ref{BCSPWR08minres}--\ref{lshpminres} except $c_1=10000$ for
DWT2680. Figure~\ref{fig1} draws the convergence curves of the
inexact RQI with MINRES for the four matrices for the fixed
$\xi_k=0.8$ and $c_1=c_2=1000$ except $c_1=10000$ for DWT2680.

\begin{figure}[ht]
\begin{center}
\includegraphics[width=7cm]{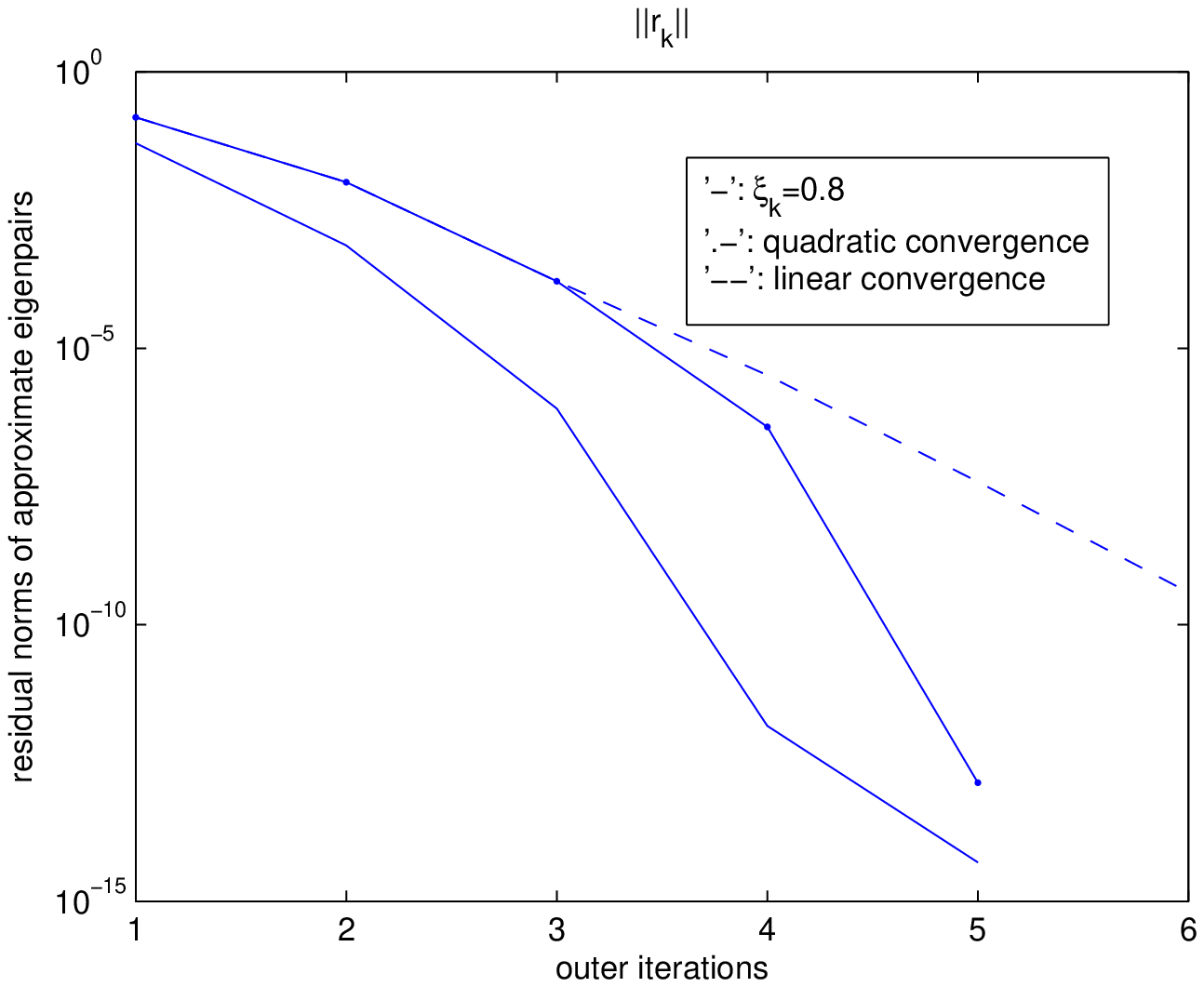}
\includegraphics[width=7cm]{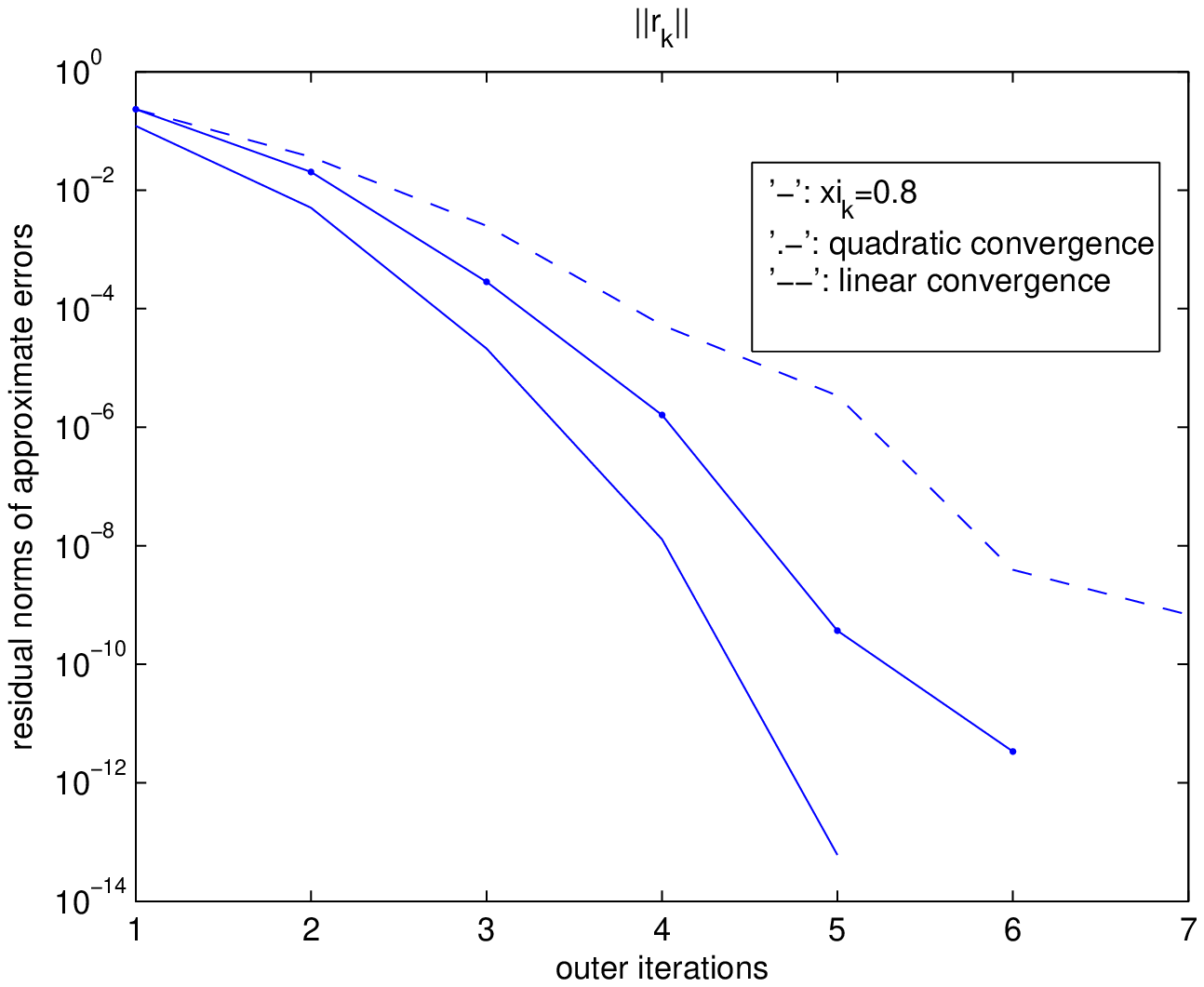}
\includegraphics[width=7cm]{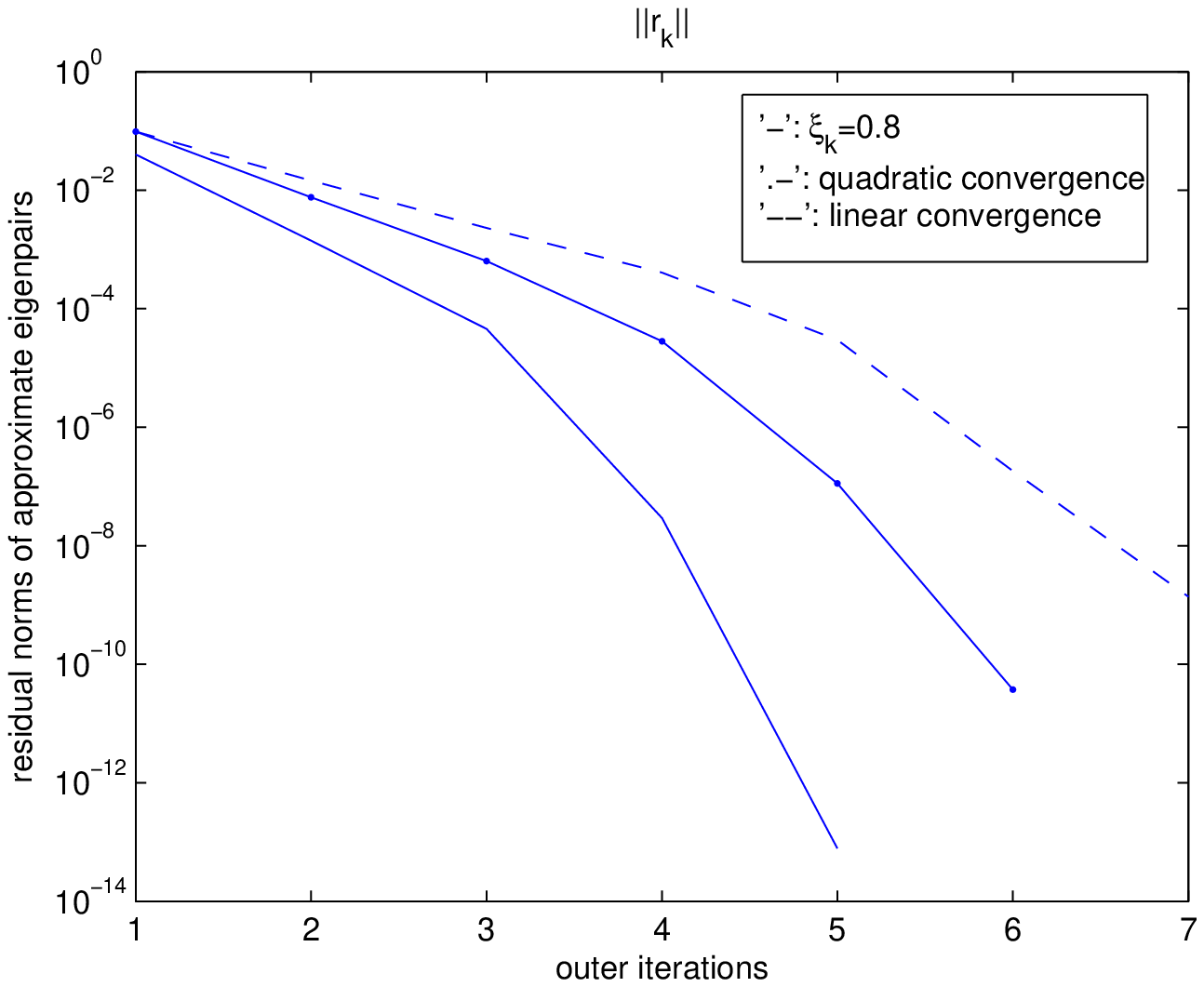}
\includegraphics[width=7cm]{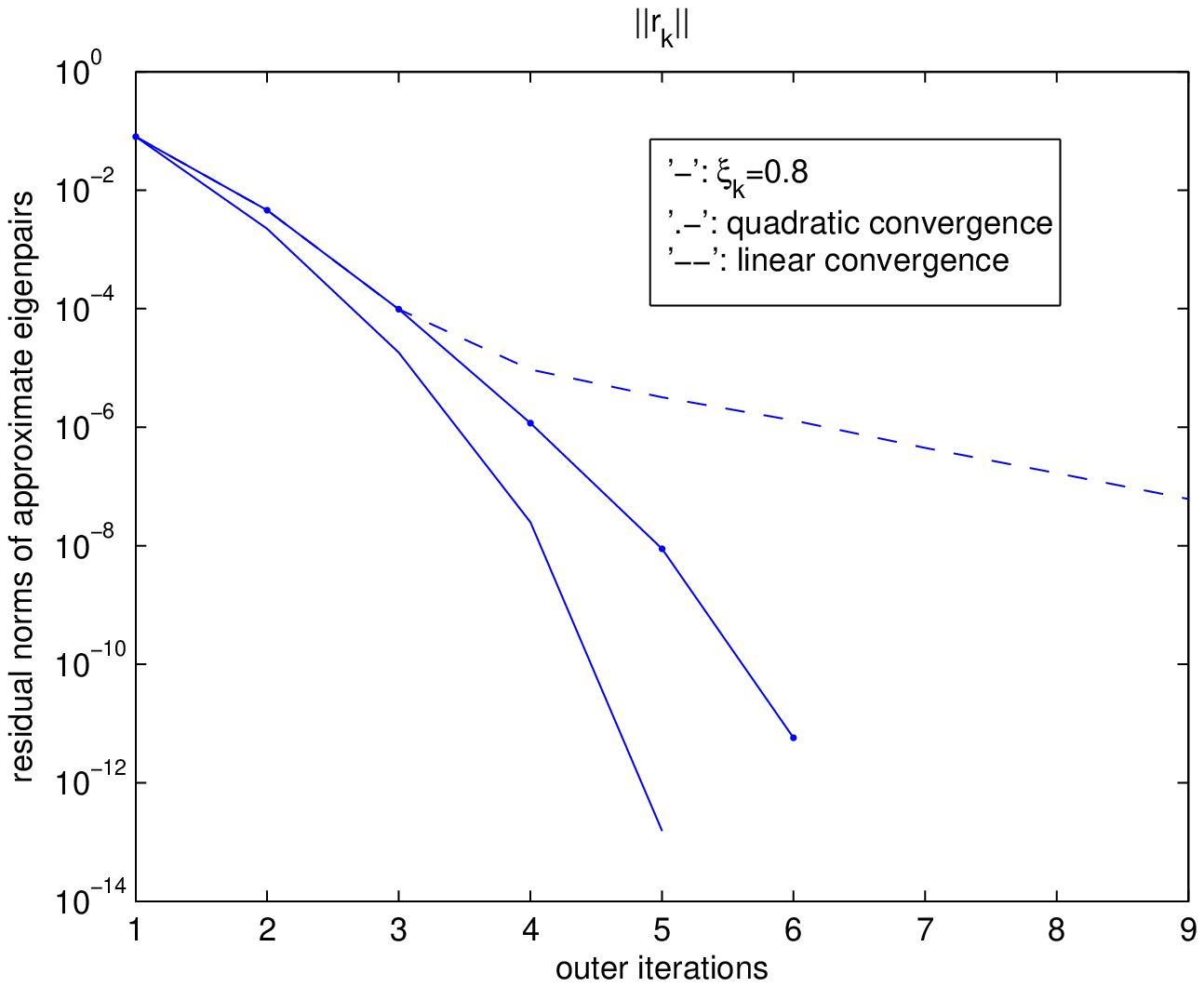}
\end{center}
\caption{Quadratic and linear convergence of the inexact RQI with
MINRES for BCSPWR08, CAN1054, DWT2680 and LSHP3466 in order, in
which the solid line denotes the convergence curve of
$\xi_k=\xi=0.8$, the dotted dash line the quadratic convergence
curve and the dashed line the linear convergence curve.}\label{fig1}
\end{figure}

Figure~\ref{fig1} clearly exhibits the typical behavior of
quadratic and linear convergence of the inexact RQI with MINRES.
Precise data details can be found in
Tables~\ref{BCSPWR08minres}--\ref{lshpminres}. As outer iterations
proceed, $\xi_k$ is increasingly closer to one but the algorithm
steadily converges quadratically and linearly; see the tables for
quadratic convergence. The tables and figure indicate that
our conditions (\ref{criteria1}) and (\ref{criteria2}) indicate the
inexact RQI works very well for chosen $c_1$ and $c_2$. For
quadratic convergence, $\xi_k$ becomes increasingly closer to one,
but on the one hand $iter^{(k-1)}$ still increases as outer
iterations proceed and on the other hand it is considerably smaller
than that with a fixed $\xi$. In contrast, for linear convergence,
$iter^{(k-1)}$ varies not much with increasing $k$ except for the
first two outer iterations, where $iter^{(k-1)}$ is no more than
five.

For other $c_1$ and $c_2$, we have made experiments in the same way.
We have observed similar phenomena for quadratic convergence and found
that the algorithm is not sensitive to $c_1$ in general, but this is not the
case for $c_2$. For different $c_2$, the method still converges
linearly but the number of outer iterations may vary quite a lot.
This should be expected as $c_2$ critically affects the linear
convergence factor $\zeta$ that uniquely determines convergence
speed, while $c_1$ does not affect quadratic convergence rate and
only changes the factor $\eta$ in the quadratic convergence bounds
(\ref{quadminres}) and (\ref{quadres}). Also, we should be careful
when using (\ref{criteria2}) in finite precision arithmetic. If
$$
\left(\frac{c_2\|r_k\|}{\|A\|_1}\right)^2\
$$
is at the level of $\epsilon$ or smaller for some $k$, then
(\ref{criteria2}) gives $\xi_k=1$ in finite precision arithmetic.
The inexact RQI with MINRES will break down and cannot continue the
$(k+1)$-th outer iteration. A adaptive strategy is to fix $\xi_k$ to
be a constant smaller than one once $\|r_k\|$ is so small that
$\xi_k=1$ in finite precision arithmetic. We found that
$\xi_k=1-10^{-8}$ is a reasonable choice. We have tested this
strategy for the four matrices and found that it works well.

\section{Concluding remarks}\label{conc}

We have considered the convergence of the inexact RQI without and
with MINRES in detail and have established a number of results on
cubic, quadratic and linear convergence. These results clearly show
how inner tolerance affects the convergence of outer iterations and
provide practical criteria on how to best control inner tolerance to
achieve a desired convergence rate. It is the first time to appear
surprisingly that the inexact RQI with MINRES generally converges
cubically for $\xi_k\leq\xi<1$ with $\xi$ a
constant not near one and quadratically
for $\xi_k$ increasingly near one, respectively.
They are fundamentally different from the existing
results and have a strong impact on effectively
implementing the algorithm so as to reduce the total
computational cost very considerably.

Using the same analysis approach in this paper, we have considered
the convergence of the inexact RQI with the unpreconditioned and
preconditioned Lanczos methods for solving inner linear systems
\cite{jia09}, where quadratic and linear convergence remarkably
allows $\xi_k\geq 1$ considerably, that is, approximate
solutions of the inner linear systems have no accuracy at all in the
sense of solving linear systems. By comparisons, we find that the
inexact RQI with MINRES is preferable in robustness and efficiency.

Although we have restricted to the Hermitian case, the analysis
approach could be used to study the convergence on the inexact
RQI with Arnoldi and GMRES for the non-Hermitian eigenvalue problem.

We have only considered the standard Hermitian eigenvalue problem
$Ax=\lambda x$ in this paper. For the Hermitian definite generalized
eigenvalue problem $Ax=\lambda Mx$ with $A$ Hermitian and $M$
Hermitian positive definite, if the $M$-inner product, the $M$-norm
and the $M^{-1}$-norm, the angle induced from the $M$-inner product
are properly placed in positions of the usual Euclidean inner
product, the Euclidean norm and the usual angle, then based on the
underlying $M$-orthogonality of eigenvectors of the matrix pair
$(A,M)$, we should be able to extend our theory developed in the paper to
the inexact RQI with the unpreconditioned and tuned preconditioned
MINRES for the generalized eigenproblem. This work is in progress.

\end{document}